\documentclass[11pt]{amsart}

\usepackage[T1]{fontenc}
\usepackage{lmodern}
\usepackage{amsmath,amssymb,amsthm,mathtools}
\usepackage{microtype}
\usepackage{hyperref}
\hypersetup{colorlinks,citecolor=blue,plainpages=false,hypertexnames=false}
\usepackage{comment}
\allowdisplaybreaks

\newtheorem{theorem}{Theorem}[section]
\newtheorem{lemma}[theorem]{Lemma}
\newtheorem{proposition}[theorem]{Proposition}
\newtheorem{corollary}[theorem]{Corollary}
\theoremstyle{remark}
\newtheorem{remark}[theorem]{Remark}

\newcommand{\C}{\mathbb C}
\newcommand{\R}{\mathbb R}
\newcommand{\Q}{\mathbb Q}
\newcommand{\Z}{\mathbb Z}
\newcommand{\PP}{\mathbb{P}}

\newcommand{\Vol}{\operatorname{Vol}}

\newcommand{\sB}{\mathcal{B}}
\newcommand{\sA}{\mathcal{A}}

\hypersetup{pdftitle={Volume comparison and rigidity for holomorphic sectional curvature}}
\newcommand{\Ric}{\mathrm{Ric}}

\newcommand{\RC}{\mathrm{RC}}
\title[K\"ahler volume comparison and rigidity]{Volume comparison and rigidity for positive holomorphic sectional curvature \\ \tiny{- An exposition of a result proved by ChatGPT}}
\author{Ved Datar}
\author{Harish Seshadri}
\date{}

\begin{document}

\begin{abstract}
We prove that the volume of a compact connected K\"ahler manifold with holomorphic sectional curvature at least 2 is bounded above by the volume of the Fubini-Study metric of constant holomorphic sectional curvature 2 on the complex projective space of the same dimension.  Moreover equality holds if and only if the manifold is biholomorphically
isometric to complex projective space. 

This answers a question posed in \cite{XiongYang}. Our approach also yields a different proof of Zhang's sharp volume estimate and Liu's rigidity theorem for compact K\"ahler manifolds with positive Ricci curvature \cite{KZhang}. 

In fact, our main result states that the same sharp volume estimate holds under a new curvature positivity condition (mean RC curvature positivity), which is implied by both positive Ricci curvature and positive holomorphic sectional curvature. The definition of this condition was inspired by the work of Yang \cite{Yang2018}.

 The proofs in this paper are due to ChatGPT 5.6 Sol Pro, and the paper is merely an exposition of its output.  The proofs has been verified by the authors and they take full responsibility for any errors. 
\end{abstract}
\maketitle

\section{Introduction}

All manifolds in this paper are assumed to be connected and without boundary. A basic question in Riemannian geometry is to obtain sharp bounds for the volume of a manifold from curvature bounds. The prototype result is Bishop's theorem: If $(M,g)$ is a compact Riemannian $m$-manifold
satisfying
\[
\Ric_g \geq (m-1)g,
\]
then
\[
\Vol(M,g)\leq \Vol(\mathbb S^m,g_{\mathbb S^m}),
\]
and equality holds precisely when $(M,g)$ is isometric to the round unit sphere.

The corresponding result for  K\"ahler manifolds was proved only relatively recently by Zhang \cite{KZhang}: Let $\omega_{\C\PP^n}$ denote the Fubini-Study metric on $\C\PP^n$ normalized so that 
\begin{equation} \label{norm}
\Ric_{\omega_{\C\PP^n}}=(n+1)\omega_{\C\PP^n}.
\end{equation}
If
$(X,\omega)$ is an $n$-dimensional compact K\"ahler manifold satisfying
\[
\Ric_\omega \geq (n+1)\omega,
\]
then
\[
\Vol(X, \omega)\leq \Vol(\C\PP^n,\omega_{\C\PP^n}) = \frac{(2 \pi)^n}{n!}.
\]

Moreover equality holds precisely when $(X,\omega)$ is biholomorphically
isometric to $(\C\PP^n,\omega_{\C\PP^n})$.

Note that the normalization (\ref{norm}) is equivalent to either of the two conditions 
\[
H_{\omega_{\C\PP^n}} \equiv 2,
\qquad
[\omega_{\C\PP^n}]=2\pi c_1(\mathcal O_{\PP^n}(1)),
\]
where $H_\omega$ denotes holomorphic sectional curvature of $\omega$. 

It is natural to ask whether the same sharp volume bound holds if the Ricci lower
bound is replaced by a lower bound on holomorphic sectional curvature. Positive holomorphic sectional curvature does not
force positivity of Rici curvature and, consequently, the underlying K\"ahler manifold need not be Fano.  the manifold to be Fano. In fact, the latter class is considerably larger than the class of Fano manifolds: recently, Shiyu Zhang proved
that every projective manifold obtained from a projective toric manifold by a
finite sequence of point blow-ups admits a K\"ahler metric with positive
holomorphic sectional curvature \cite{ShiyuZhang}; in particular, every
rational surface does. Hence it is not clear, a priori, that there is {\it any} upper bound on volumes in this setting.

Perhaps surprisingly, it turns out that the same sharp volume estimate holds even for K\"ahler manifolds with positive holomorphic curvature - this is the main result of this paper. In fact, we prove that the sharp estimate holds for {\it mean RC curvature} positivity, a condition implied by both Ricci positivity and positive holomorphic sectional curvature. This condition was inspired by the important work of Yang, who introduced {\it RC-positivity} and proved that it implies projectivity and rational
connectedness \cite{Yang2018,Yang2020}.

\begin{theorem}\label{thm:main}
Let $(X,\omega)$ be a compact $n$-dimensional K\"ahler manifold.
Suppose that
\[
\mu_{\RC}(x)\geq \frac{n+1}{n}
\]
for every $x\in X$. Then
\[
\int_X\omega^n\leq (2\pi)^n.
\]
If equality holds, then the complex scalar curvature $S_\omega$ satisfies
\[
\frac{\displaystyle\int_X S_\omega\,\omega^n}
     {\displaystyle\int_X\omega^n}
=n(n+1).
\]
\end{theorem}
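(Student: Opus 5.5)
The plan is to run a Bishop--Gromov type argument in geodesic polar coordinates, using the index form with test fields modeled on the Fubini--Study metric of holomorphic sectional curvature $2$. Our curvature hypothesis controls only a \emph{mean} of curvatures, not every $R(\gamma',e,\gamma',e)$, so the comparison must be run for an averaged Jacobian rather than pointwise. I am assuming that $\mu_{\RC}(x)$ is, by its earlier definition, an average over unit vectors of a combination of holomorphic sectional and Ricci-type curvatures whose value on $(\C\PP^n,\omega_{\C\PP^n})$ is exactly $\frac{n+1}{n}$.

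Fix $p\in X$ and write $\omega^n=\int_{S_pX}\int_0^{c(v)} \mathcal J(t,v)\,dt\,dv$ (up to the normalizing constant), where $c(v)$ is the cut time and $\mathcal J$ the Jacobian of $\exp_p$. In $\C\PP^n$ the Jacobi fields along a unit-speed geodesic $\gamma$ are $\sin(2t)/2$ in the $J\gamma'$ direction and $\sin t$ in the remaining $2n-2$ directions, so $\mathcal J_{\C\PP^n}=\sin^{2n-2}t\cdot\tfrac12\sin 2t$, supported on $[0,\pi/2]$. For $X$, I would bound the mean curvature $m(t,v)=\partial_t\log\mathcal J$ of the distance sphere by the index form evaluated on parallel test fields $f(t)E_i(t)$. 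I would take $f(t)=\sin(2t)/\sin(2r)$ for $E=J\gamma'$ and $f(t)=\sin t/\sin r$ for the orthonormal complement. The curvature terms then appear as $\int_0^r \big(\tfrac{\sin^2 2s}{\sin^2 2r}H(\gamma')+\tfrac{\sin^2 s}{\sin^2 r}\sum_{i} R(\gamma',e_i,\gamma',e_i)\big)ds$. Because the complement of $\gamma'$, $J\gamma'$ splits into complex lines, $\sum_i R(\gamma',e_i,\gamma',e_i)=\Ric(\gamma',\gamma')-H(\gamma')$, so only $H$ and $\Ric$ along $\gamma'$ enter.

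The key step, and the one I expect to be the main obstacle, is that neither $H\ge 2$ nor a Ricci bound is assumed pointwise along $\gamma$. Only the mean quantity $\mu_{\RC}\ge\frac{n+1}{n}$ is available, and this lower bound is pointwise in the base point $x$, not along a fixed direction $v$. I would therefore not compare $\mathcal J(t,v)$ for each $v$. Instead I would integrate the Riccati inequality over the unit sphere bundle, in a Liouville-measure or Croke-type way. Since geodesic flow preserves Liouville measure, averaging the curvature terms over all $(\gamma(t),\gamma'(t))$ turns them into $\int_X$ of the fibrewise mean, which is exactly where $\mu_{\RC}$ enters. Concretely, I would try to show $\int_{SX}\int_0^{c}\mathcal J\le \Vol(SX)\cdot\int_0^{\pi/2}\mathcal J_{\C\PP^n}$, paired with a Croke-type inequality relating $\Vol(X)^2$ to the averaged quantity. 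If the weights $\sin^2 2s$ and $\sin^2 s$ fail to combine into the mean RC expression, I would fall back to a single test function weighting all directions equally, as in Heintze--Karcher. That fallback should still give the sharp bound $\int_X\omega^n\le(2\pi)^n$, provided the averaged curvature is normalized as in the definition of $\mu_{\RC}$.

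For the equality statement, I would trace back through the chain. Equality forces equality in each index-form estimate after integration, hence equality in the averaged curvature bound almost everywhere, and in particular cut times $c(v)=\pi/2$. Averaging the curvature identity over $S_xX$ expresses the integrand through the complex scalar curvature: the sphere averages of $H$ and $\Ric$ are $2S/(n(n+1))$ and $S/n$ respectively. Integrating this identity over $X$ should then give $\int_X S_\omega\omega^n/\int_X\omega^n=n(n+1)$, which is the value for $\C\PP^n$.
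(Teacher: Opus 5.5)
Your proposal has a genuine gap, and it starts with the hypothesis itself. By \eqref{mu}, $\mu_{\RC}(x)=\max_{B\in\sB_x}\min_{v\neq0}\langle R_Bv,v\rangle/|v|^2$ is a max--min over trace-one positive semidefinite $B$. It is not a sphere average of $H$ and $\Ric$, and pointwise it gives no lower bound on $H$, $\Ric$ or $S_\omega$. For instance, taking $B=e_1e_1^*$, the condition already holds once $R(e_1,\bar e_1,v,\bar v)\geq\frac{n+1}{n}|v|^2$ for all $v$, and this leaves $R_{2\bar22\bar2}$ free to be arbitrarily negative. So your index-form integrand $f_1^2H(\gamma')+f_2^2\bigl(\Ric(\gamma',\gamma')-H(\gamma')\bigr)$ is uncontrolled along every geodesic. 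Your equality step also has nothing to trace back, because there is no pointwise relation between $\mu_{\RC}$ and $S_\omega$. This is exactly why the paper's rigidity argument needs $H_\omega\geq2$ or $\Ric_\omega\geq(n+1)\omega$, which give $S_\omega\geq n(n+1)$ pointwise.

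The comparison machinery also fails on its own terms. Liouville invariance lets you average quantities that are linear in the curvature. But $\mathcal J$ is recovered from $m=\partial_t\log\mathcal J$, which obeys a Riccati inequality along each individual geodesic, with $r$-dependent weights $f_1^2\geq f_2^2$, and it is integrated only up to a $v$-dependent cut time. A bound on a fibrewise mean of curvature therefore gives no sharp bound on $\int_0^{c(v)}\mathcal J\,dt$, even on average. Croke/Berger--Kazdan-type inequalities do not help, since they bound volume from below, not above. The $\C\PP^n$-model comparison really needs pointwise lower bounds on both $H(\gamma')$ and $\Ric(\gamma',\gamma')$, and neither hypothesis of Corollary~\ref{cor:main} supplies both by itself (compare the extra conjugate-radius assumption Xiong--Yang needed). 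Your Heintze--Karcher fallback with a single test function is just Bishop's comparison with the round $2n$-sphere, which is not sharp for $n\geq2$. For $n=2$ and $\Ric\geq3$ it gives only $\int_X\omega^2\leq2\Vol(S^4)=16\pi^2/3>4\pi^2$. Your model is also off by a scale: $\sin(2t)/2$ and $\sin t$ are the Jacobi fields for $H\equiv4$, where $\int\omega^n=\pi^n$. For $H\equiv2$ they are $\sin(\sqrt2t)/\sqrt2$ and $\sqrt2\sin(t/\sqrt2)$ on $[0,\pi/\sqrt2]$.

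The missing idea is that $\mu_{\RC}$ is designed to feed a Bochner argument on holomorphic cotensors, not geodesic comparison. The argument has three steps.

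First, take a maximum point of $|s|^2$ for $0\neq s\in H^0\bigl(X,(\Omega^1_X)^{\otimes m}\otimes L^{\otimes q}\bigr)$. The contraction density $A=\sum_\alpha C_\alpha^*C_\alpha$ there has trace $m$. The min--max form in Remark~\ref{rem:min-max} then gives a unit $u$ with $\operatorname{tr}(AR_{u\bar u})\geq m\frac{n+1}{n}$. If $\sqrt{-1}\Theta_h(L)\leq\kappa\omega$, this forces vanishing whenever $m>\frac{n\kappa}{n+1}q$.

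Second, a Wronskian-type jet count converts this vanishing into a bound on sections. One takes the leading term of $j^r\tau_1\wedge\cdots\wedge j^r\tau_N$ and bounds its filtration weight from below by a simplex first-moment argument. This gives $h^0\bigl(X,(\Omega^1_X)^{\otimes s}\otimes L\bigr)\leq\frac{n^s}{n!}\bigl(\kappa+\frac{n+1}{2}-\frac{n+1}{n}s\bigr)_+^n$.

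Third, approximate $[\omega]/2\pi$ by rational classes, which is possible since $H^{2,0}(X)=0$. Asymptotic Riemann--Roch then gives $\int_X\omega^n\leq(2\pi)^n$. In the equality case, compare the $k^{n-1}$ coefficients with Hirzebruch--Riemann--Roch for $s=0$ and for some $s>n/2$. This forces $c_1(X)\cdot([\omega]/2\pi)^{n-1}=n+1$, which is the scalar-curvature identity.
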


As an immediate consequence we obtain the following sharp comparison theorem.

\begin{corollary}\label{cor:main}
Let $(X,\omega)$ be a compact $n$-dimensional K\"ahler manifold. If
\[
\Ric_\omega\geq(n+1)\omega
\qquad\text{or}\qquad
H_\omega\geq2,
\]
then
\[
\int_X\omega^n\leq(2\pi)^n.
\]
Moreover equality holds if and only if $(X,\omega)$ is biholomorphically
isometric to $(\C\PP^n,\omega_{\C\PP^n})$.
\end{corollary}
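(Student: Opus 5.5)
The corollary should follow from Theorem~\ref{thm:main} in three steps: show that either curvature hypothesis gives $\mu_{\RC}\ge \frac{n+1}{n}$; read off the volume bound; and in the equality case use the scalar curvature identity from Theorem~\ref{thm:main} to force constant curvature. I expect $\mu_{\RC}(x)$ to be defined as the minimum over unit $v\in T^{1,0}_xX$ of the best available average $\frac{1}{\dim W}\sum_i R(v,\bar v,w_i,\bar w_i)$, where $W$ ranges over subspaces (containing $v$) with unitary basis $\{w_i\}$. This is a "mean" version of Yang's condition that some $w$ satisfies $R(v,\bar v,w,\bar w)>0$. Both hypotheses should then fit into this definition.

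\emph{First step.} Take $W=T^{1,0}_xX$. The average is $\frac1n\Ric(v,\bar v)$, which is at least $\frac{n+1}{n}$ when $\Ric_\omega\ge (n+1)\omega$. Take $W=\C v$. The average is $H_\omega(v)\ge 2\ge \frac{n+1}{n}$ when $H_\omega\ge 2$, since $n\ge1$. Theorem~\ref{thm:main} then gives $\int_X\omega^n\le (2\pi)^n$, which is the asserted bound. Under the normalization (\ref{norm}) this equals $n!\,\Vol(\C\PP^n,\omega_{\C\PP^n})$. Conversely, $\omega_{\C\PP^n}$ attains equality, since $[\omega_{\C\PP^n}]^n=(2\pi)^n$.

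\emph{Equality case.} Suppose $\int_X\omega^n=(2\pi)^n$. Theorem~\ref{thm:main} gives that the average of $S_\omega$ equals $n(n+1)$.
\begin{itemize}
\item In the Ricci case, $S_\omega=\operatorname{tr}_\omega\Ric_\omega\ge n(n+1)$ pointwise, so $S_\omega\equiv n(n+1)$. Then $\Ric_\omega-(n+1)\omega$ is a semipositive form with zero trace, hence $\Ric_\omega=(n+1)\omega$. Thus $X$ is Fano and Kähler--Einstein with $[\omega]=\frac{2\pi}{n+1}c_1(X)$ and $c_1(X)^n=(n+1)^n$.
\item In the holomorphic sectional curvature case, use Berger's averaging identity: $S_\omega(x)$ equals $\frac{n(n+1)}{2}$ times the mean of $H_\omega$ over the unit sphere in $T^{1,0}_xX$. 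So $S_\omega\ge n(n+1)$ pointwise, with equality at $x$ only if $H_\omega\equiv 2$ on $T_xX$. Equality of averages and continuity force $H_\omega\equiv2$ everywhere. A Kähler metric of constant holomorphic sectional curvature $2$ is locally isometric to $\omega_{\C\PP^n}$. Its Ricci curvature is $(n+1)\omega>0$, so $X$ is Fano and hence simply connected. The universal-cover/developing argument for space forms then gives a biholomorphic isometry to $(\C\PP^n,\omega_{\C\PP^n})$.
\end{itemize}

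\emph{Main obstacle.} The hard step is finishing the Ricci case. Knowing that $X$ is Kähler--Einstein with maximal anticanonical volume $c_1^n=(n+1)^n$ does not by itself give constant holomorphic sectional curvature. I would first try to squeeze more out of the proof of Theorem~\ref{thm:main}: its equality case presumably saturates a pointwise inequality involving $\mu_{\RC}$, and tracking that should force $R(v,\bar v,w,\bar w)$ to take the space-form values. If that fails, I would invoke the known characterization of $\C\PP^n$ among Kähler--Einstein Fano manifolds of maximal volume (Fujita's theorem, or Liu's rigidity \cite{KZhang}). With $X\cong\C\PP^n$, uniqueness of Kähler--Einstein metrics (Bando--Mabuchi) gives $\omega=\omega_{\C\PP^n}$ up to automorphism.
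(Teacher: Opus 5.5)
\textbf{Gap: you used the wrong definition of $\mu_{\RC}$.} Your Ricci-case reduction and your treatment of the equality case match the paper. That means Berger averaging to force $H_\omega\equiv2$ followed by the space-form argument, and $\Ric_\omega=(n+1)\omega$ followed by your fallback of Fujita's theorem and Bando--Mabuchi uniqueness. The problem is the step $H_\omega\geq2\Rightarrow\mu_{\RC}\geq\frac{n+1}{n}$. The paper defines
\[
 \mu_{\RC}(x)=\max_{B\in\sB_x}\ \min_{0\neq v\in T^{1,0}_xX}\frac{\langle R_Bv,v\rangle}{|v|^2},
 \qquad R_B=\sum_{i,j}B^{i\bar j}R_{i\bar j},
\]
with $\sB_x$ the trace-one positive semidefinite Hermitian endomorphisms. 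So a single $B$ must work for all $v$ at once: this averages Yang's \emph{uniform} RC positivity, not the pointwise version.

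Your choice $B=\frac1n\mathrm{Id}$ in the Ricci case is independent of $v$, so it is fine. Taking $W=\C v$ in the $H$-case makes $B$ depend on $v$, so it gives no lower bound for $\mu_{\RC}$. This is not a technicality. On $(\C\PP^n,\omega_{\C\PP^n})$ one has $\min_{|v|=1}\langle R_Bv,v\rangle=1+\lambda_{\min}(B)\leq1+\frac1n$, so $\mu_{\RC}=\frac{n+1}{n}$ exactly. Your argument would instead give $\mu_{\RC}\geq2$.

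You also cannot keep your own definition. Consider $\lambda\,\omega_{\C\PP^n}$ with $\lambda=\frac{2n}{n+1}$, which has $H\equiv\frac{n+1}{n}$. By your $W=\C v$ argument it would satisfy the hypothesis of Theorem~\ref{thm:main}. Yet its volume $\lambda^n(2\pi)^n$ exceeds $(2\pi)^n$ when $n\geq2$.

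\textbf{The missing idea: minimax plus Royden's inequality.} The pairing $(A,B)\mapsto\operatorname{tr}(AR_B)$ is bilinear on compact convex sets, so the finite-dimensional minimax theorem gives
\[
 \mu_{\RC}(x)=\min_{A\in\sB_x}\ \max_{B\in\sB_x}\operatorname{tr}(AR_B).
\]
Given $A$ with eigenvalues $a_1,\dots,a_n$ in a unitary eigenbasis $e_1,\dots,e_n$, take $B=A$ and prove Royden's inequality
\[
 \operatorname{tr}(AR_A)=\sum_{i,j}R_{i\bar i j\bar j}a_ia_j\geq\Bigl(\sum_ia_i\Bigr)^2+\sum_ia_i^2\geq\frac{n+1}{n}.
\]
To prove it, average $R(z_\theta,\overline{z_\theta},z_\theta,\overline{z_\theta})\geq2$ over $z_\theta=\sum_i\sqrt{a_i}\,e^{\sqrt{-1}\theta_i}e_i$, $\theta\in\mathbb T^n$. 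Only the index pairings with $\{i,k\}=\{j,\ell\}$ survive, and the average equals $2\sum_{i,j}R_{i\bar ij\bar j}a_ia_j-\sum_iR_{i\bar ii\bar i}a_i^2$. Then $R_{i\bar ii\bar i}=H_\omega(e_i)\geq2$ and Cauchy--Schwarz finish the proof.

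Higher-rank $A$ cannot be avoided. The vanishing theorem applies the bound to $A=\frac1m\sum_\alpha C_\alpha^*C_\alpha$, built from the contractions of a cotensor. The single-direction bound $H_\omega(v)\geq2$ does not control $\operatorname{tr}(AR_{u\bar u})$ for such $A$.
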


In particular, Corollary~\ref{cor:main} solves the volume comparison
problem posed by Xiong and Yang \cite{XiongYang}. In the Ricci case it also recovers Zhang's
optimal volume bound. The proof is different from the arguments used in
\cite{KZhang}; its starting point is instead Yang's RC-positivity method
and a maximum principle for holomorphic cotensors.

We note that in \cite{XiongYang}, Xiong and Yang proved the sharp volume estimate of Theorem \ref{thm:main} for positive holomorphic sectional curvature under the additional assumption that the conjugate radius at some point is at
least $\pi/\sqrt{2}$. 



\subsection{Outline of the proof of Theorem \ref{thm:main}}

We briefly describe the main ideas in the proof. The starting point is Yang's
work on RC-positivity. His method uses a favorable tangent direction together
with the maximum principle to prove vanishing theorems for holomorphic
cotensors. For the volume estimate, we need a quantitative version of this
argument which keeps track of a line-bundle twist.

To do this, we introduce  {\it mean RC curvature}. Instead of choosing a single
tangent direction, we average the curvature against positive semidefinite
endomorphisms of trace one. A finite-dimensional minimax argument gives a
formulation which is well suited to the curvature of tensor powers of the
cotangent bundle. Royden's inequality then shows that
\[
H_\omega\geq 2
\]
implies
\[
\mu_{\RC}\geq \frac{n+1}{n}.
\]
The same lower bound follows directly from
\[
\Ric_\omega\geq (n+1)\omega.
\]

The main analytic consequence is a quantitative vanishing theorem. If
$(L,h)$ is a Hermitian holomorphic line bundle satisfying
\[
\sqrt{-1}\Theta_h(L)\leq \kappa\omega,
\]
then
\[
H^0\left(
X,(\Omega_X^1)^{\otimes m}\otimes L^{\otimes q}
\right)=0
\]
whenever
\[
m>\frac{n\kappa}{n+1}q.
\]
The proof is a maximum-principle argument which uses the mean RC lower bound
to control the different cotensor directions at the same time.

The next step is to convert this vanishing theorem into a bound for the
dimension of a linear system. This is done using a refinement of the
classical Poincar\'e--Siegel jet-counting argument. Roughly speaking, one
takes sufficiently many jets of a basis of sections and uses the natural
filtration of the jet bundle to produce a nonzero twisted cotensor. The
vanishing theorem gives an upper bound for the possible jet weight, while a
counting argument gives a lower bound in terms of the dimension of the space
of sections. Combining the two gives a sharp estimate for
\[
h^0(X,L).
\]
More generally, the same argument gives estimates for
\[
h^0\left(X,(\Omega_X^1)^{\otimes s}\otimes L\right)
\]
with a fixed cotensor factor.

Applying this estimate to large powers of line bundles and using asymptotic
Riemann--Roch gives the volume bound. Since the K\"ahler class need not be
rational, we first approximate it by rational K\"ahler classes and then pass
to the limit.

The estimate with a fixed cotensor factor is also used in the equality case.
When
\[
\int_X\omega^n=(2\pi)^n,
\]
comparison with Hirzebruch--Riemann--Roch determines the next coefficient in
the Hilbert polynomial and gives
\[
c_1(X)\cdot
\left(\frac{[\omega]}{2\pi}\right)^{n-1}=n+1.
\]
Equivalently, the average complex scalar curvature is $n(n+1)$.

The final rigidity arguments are standard. Under $H_\omega\geq2$, Berger's
averaging formula gives the pointwise lower bound
\[
S_\omega\geq n(n+1),
\]
and equality of the average forces $H_\omega\equiv2$. Under the Ricci lower
bound, equality forces
\[
\Ric_\omega=(n+1)\omega.
\]
The corresponding rigidity results then identify the metric with the
Fubini--Study metric on $\PP^n$.

\subsection{Organisation of the paper}

In Section~2 we introduce mean RC curvature, give its minimax
characterization, and prove that both $H_\omega\geq2$ and
$\Ric_\omega\geq(n+1)\omega$ imply
\[
\mu_{\RC}\geq \frac{n+1}{n}.
\]
Section~3 contains the quantitative vanishing theorem for
line-bundle-valued cotensors. In Section~4 we prove the refined
Poincar\'e--Siegel estimate for spaces of sections. Section~5 combines these
estimates with rational approximation and Riemann--Roch to prove the volume
bound and treat the equality case.

\subsection*{Curvature conventions} Write
\[
 \omega=\sqrt{-1}\,g_{i\bar j}\,dz^i\wedge d\bar z^j.
\]
The curvature of $T_X$ is normalized by
\[
 \langle R^{T_X}(u,\bar u)v,w\rangle=R(u,\bar u,v,\bar w).
\]
For a Hermitian line bundle $(L,h)$, write
\[
 \sqrt{-1}\Theta_h(L)
 =\sqrt{-1}\,\Theta_{i\bar j}\,dz^i\wedge d\bar z^j,
 \qquad
 R^L(u,\bar u)=\Theta_{i\bar j}u^i\bar u^j,
\]
so $c_1(L)=[\sqrt{-1}\Theta_h(L)]/(2\pi)$.  With these conventions, a holomorphic section of a
Hermitian holomorphic vector bundle $E$ satisfies
\begin{equation}\label{eq:bochner}
 (\partial\bar\partial|s|^2)(u,\bar u)
 =|\nabla'_u s|^2-\langle R^E(u,\bar u)s,s\rangle.
\end{equation}
\section{Averaged RC positivity}\label{sec:mean-rc}
Let $(X,\omega)$ be a compact $n$-dimensional K\"ahler manifold. Let $E$ be a holomorphic vector bundle with a Hermitian metric $\eta$. Let $R^E \in \Gamma(\Lambda^{1,1}T^*_X\otimes \mathrm{End}(E))$ denote the curvature of the Chern connection. In \cite{Yang2020}, Yang defines $(E,\eta)$ to be RC positive if for all $x\in X$ and all $v\in E_x$, there exists a $u \in T_x^{1,0}X$ such that $$R^E(u,\bar u, v,\bar v)>0.$$ It is called {\em uniformly RC positive} with lower bound $\mu>0$ if for all $x\in X$, there exists a unit vector $u\in T_x^{1,0}X$ such that for all $v\in E_x$, $$R^E(u,\bar u, v,\bar v) \geq \mu|v|^2.$$ Yang then proves some vanishing results that play a key role in his proof of rational connectedness. He also observes that if $H\geq 2$, then one has uniform RC positivity with lower bound $\mu=1$. 

For our volume estimates, we prove analogous vanishing theorems under an averaged version of Yang's positivity, which we call {\em mean uniform RC positivity}. For a Hermitian vector space $V$, we denote by $\mathrm{Herm}^{\geq 0}(V)$ the set of all non-negative definite Hermitian endomorphisms and let $$\sB(V) := \{B\in \mathrm{Herm}^{\geq 0}(V)~|~ \mathrm{tr}(B) = 1\}.$$  Let $\sB_x:= \sB(T_x^{1,0}X).$ For any $x\in X$ and $B\in \sB_x$, we define $$R^E_B:= \sum_{i,j}B^{i\bar j}R^E_{i\bar j} \in \mathrm{End}(E_x),$$ and
\begin{equation}\label{mu}
 \mu_{RC}(E,\eta;\omega)(x):=
 \max_{B\in\sB_x}\min_{0\neq v\in E_x}
 \frac{\langle R^E_Bv,v\rangle_\eta}{|v|_\eta^2}.
\end{equation}
We will use the shorthand $\mu_{RC}(x)$ if
$(E,\eta)=(T^{1,0}X,\omega)$.  

\begin{remark}\label{rem:min-max}
Let $ \sA_x^E:= \sB(E_x)$. For fixed $B\in\sB_x$, the minimum of $\operatorname{tr}(AR_B^E)$ over
$A\in\sA_x^E$ is the least eigenvalue of $R_B^E$. Hence 
$$\mu_{RC}(E,\eta;\omega)(x)=
 \max_{B\in\sB_x} \min_{A \in \sA^E_x} \operatorname{tr}(AR_B^E).$$
Since
$(A,B)\mapsto\operatorname{tr}(AR_B^E)$ is bilinear on the compact convex sets $\sA^E_x$ and $\sB_x$,
the finite-dimensional minimax theorem \cite[Theorem~7, p.~319]{AE} gives
\begin{equation}\label{mm}
 \mu_{RC}(E,\eta;\omega)(x)
 =\min_{A\in\sA_x^E}\max_{B\in\sB_x}\operatorname{tr}(AR_B^E).
\end{equation}
For fixed $A$, the maximum over $\sB_x$ is attained at a rank-one projection.
Consequently,
\[
 \mu_{RC}(E,\eta;\omega)(x)
 =\min_{A\in\sA_x^E}\max_{|u|=1}\operatorname{tr}(AR^E_{u\bar u}).
\]
In particular, since $\sA_x^{T^{1,0}X}=\sB_x$,
\begin{equation}\label{unit}
 \mu_{RC}(x)
 =\min_{A\in\sB_x}\max_{|u|=1}\operatorname{tr}(AR_{u\bar u}).
\end{equation}
\end{remark}
The main observation is the following: 

\begin{lemma}\label{lem:rc-ric-h}
Let $(X,\omega)$ be a $n$-dimensional K\"ahler manifold. If
\[
 \Ric_\omega\geq(n+1)\omega
 \qquad\text{or}\qquad
 H_\omega\geq2,
\]
then $\mu_{RC}(x)\geq(n+1)/n$ for all $x \in X$.
\end{lemma}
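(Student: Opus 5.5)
Using the min–max form \eqref{unit}, it suffices to show that for every $A\in\sB_x$ there is a unit vector $u$ with $\operatorname{tr}(AR_{u\bar u})\geq (n+1)/n$. Here $\operatorname{tr}(AR_{u\bar u})=\sum_{k,l}A^{k\bar l}R(u,\bar u,e_l,\bar e_k)$ in a unitary frame. The plan is to average this quantity over the unit sphere $S\subset T^{1,0}_xX$ with respect to the normalized invariant measure $d\sigma$. If the average is at least $(n+1)/n$, then the maximum over $|u|=1$ is too. The two hypotheses are handled by computing (or bounding) the average differently.

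\emph{Ricci case.} Use the standard identity $\int_S u^i\bar u^j\,d\sigma=\delta^{ij}/n$. It gives
\[
\int_S R(u,\bar u,e_l,\bar e_k)\,d\sigma=\frac1n\sum_i R_{i\bar i l\bar k}=\frac1n\Ric_{l\bar k},
\]
so the average of $\operatorname{tr}(AR_{u\bar u})$ equals $\frac1n\operatorname{tr}(A\,\Ric)\geq\frac{n+1}{n}\operatorname{tr}A=\frac{n+1}{n}$. (Alternatively, take $B=\mathrm{Id}/n$ directly in the max–min definition \eqref{mu}: $R_B=\Ric/n\geq\frac{n+1}{n}$.)

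\emph{Holomorphic sectional curvature case.} Diagonalize $A=\sum_k a_k\, v_k\otimes v_k^*$ with $a_k\geq0$, $\sum a_k=1$, and $\{v_k\}$ unitary. Then
\[
\operatorname{tr}(AR_{u\bar u})=\sum_k a_k R(u,\bar u,v_k,\bar v_k).
\]
Averaging over $u\in S$ is not the right tool here, since it only recovers $\Ric$, which need not be large. The first choice is instead to take $u$ in the support of $A$ and use Royden-type inequalities. Concretely, consider the weighted average over $u$ of $R(u,\bar u,v,\bar v)$ with $v$ itself varying. The cleanest route is the following.

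Set $Q(u):=\sum_k a_kR(u,\bar u,v_k,\bar v_k)$, a Hermitian form in $u$, and choose $u$ to maximize it. Then $\max_{|u|=1}Q\geq \operatorname{tr}(A'Q)$ for any $A'\in\sB_x$; choose $A'=A$, so that
\[
\max_{|u|=1}Q\geq \sum_{k,l}a_ka_lR_{k\bar kl\bar l}.
\]
Royden's lemma states: if $H\geq 2$ and $a_k\geq0$, then
\[
\sum_{k,l}a_ka_lR_{k\bar kl\bar l}\geq \Bigl(\bigl(\textstyle\sum a_k\bigr)^2+\sum a_k^2\Bigr)=1+\sum a_k^2 .
\]
This follows by averaging $H(\sum_k \sqrt{a_k}\,e^{i\theta_k}v_k)$ over the torus $\theta$, using $H\geq 2$ (i.e.\ $R(w,\bar w,w,\bar w)\geq2|w|^4$) and the symmetries of $R$. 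By Cauchy–Schwarz, $\sum a_k^2\geq 1/n$, so the maximum is at least $1+1/n=(n+1)/n$, as desired.

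The main obstacle is getting Royden's torus-averaging computation right. Expanding $R(w,\bar w,w,\bar w)$ with $w=\sum\sqrt{a_k}e^{i\theta_k}v_k$ and integrating over $\theta$, only terms with matching phases survive. These yield $\sum_k a_k^2R_{k\bar kk\bar k}+2\sum_{k\neq l}a_ka_lR_{k\bar kl\bar l}$, using the Kähler symmetry $R_{k\bar l l\bar k}=R_{k\bar k l\bar l}$. The lower bound is $2(\sum a_k)^2=2$. Combining this with $R_{k\bar kk\bar k}\geq 2$ weighted by $a_k^2$ gives
\[
2\sum a_ka_lR_{k\bar kl\bar l}\geq 2+2\sum a_k^2 ,
\]
which is exactly the claimed bound. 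I would check this bookkeeping carefully; everything else is formal.
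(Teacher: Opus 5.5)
Your proposal is correct and follows the paper's proof. In the Ricci case, your sphere average amounts to the paper's choice $B=\mathrm{Id}/n$. In the holomorphic sectional curvature case, the bound $\max_{|u|=1}Q\geq\operatorname{tr}(AQ)$ is exactly the paper's choice $B=A$, followed by Royden's torus-averaging inequality and Cauchy--Schwarz, and your phase-average bookkeeping matches the paper's.
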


\begin{proof}
First suppose that $\Ric_\omega\geq(n+1)\omega$. Fix $x\in X$ and let $B=n^{-1}I\in\sB_x$. Then
\[
 R^{T^{1,0}X}_B=\frac1n\Ric_\omega
 \geq\frac{n+1}{n}\operatorname{Id},
\]
and hence $\mu_{RC}(x)\geq(n+1)/n$ from (\ref{mu}).
Next, suppose $H_\omega \geq 2$. By (\ref{mm}), it is enough to show that for all $A\in \sB_x$, there exists a $B\in \sB_x$ such that 
\begin{equation}\label{eq:H-meanRC}
\mathrm{tr}(AR_B^{T^{1,0}X}) \geq \frac{n+1}{n}.
\end{equation} Let $A\in \sB_x$ be an endomorphism  with eigenvalues $a_1,\cdots,a_n \geq 0$ and $\sum a_i = 1$. Let $\{e_1,\cdots,e_n\}$ be a unitary basis of $T_x^{1,0}X$ diagonalizing $A$. Then by Lemma \ref{lem:royden} (\ref{eq:phase2} below, we have $$\mathrm{tr}(AR_A^{T^{1,0}X})= \sum_{i,j}R_{i\bar i j \bar j} a_ia_j \geq \frac{n+1}{n}.$$ So we have verified the condition \eqref{eq:H-meanRC} by choosing $B = A$.
\end{proof}

\begin{remark}[Relation with Yang's work] In the above language, Yang's uniform positivity is recovered if the outer max in the definition of $\mu_{RC}$ is taken over only those endomorphisms in $\sB_x$ that have rank one. If we call such an invariant $\mu_{RC}^{(1)}$, then what Yang observed is that $H_\omega \geq 2$ implies $\mu_{RC}^{(1)} \geq 1.$ On the other hand since we take a maximum over a larger space of endomorphisms, we obtain a bigger lower bound.  
\end{remark}

\begin{lemma}[Royden's inequality]\label{lem:royden} Let $H_\omega \geq 2$. At a point $x$, let $e_1,\ldots,e_n$ be a unitary basis of $T_x^{1,0}X$ and
let $a_1,\ldots,a_n\geq0$.  Then
\begin{equation}\label{eq:phase1}
 \sum_{i,j}R_{i\bar i j\bar j}a_i a_j
 \geq 
 \left(\sum_i a_i\right)^2+\sum_i a_i^2
 \end{equation}
and hence
\begin{equation}\label{eq:phase2}
 \sum_{i,j}R_{i\bar i j\bar j}a_i a_j
 \geq\frac{n+1}{n}\left(\sum_i a_i\right)^2.
\end{equation}

\end{lemma}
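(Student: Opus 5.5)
This is Royden's classical averaging trick: we average the holomorphic sectional curvature over vectors with random unit-modulus phases.

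\emph{Setup.} Write $b_i=\sqrt{a_i}\ge0$. For $\theta=(\theta_1,\dots,\theta_n)\in[0,2\pi)^n$ set $v_\theta=\sum_i b_i e^{\sqrt{-1}\theta_i}e_i$. Since $H_\omega\ge2$, we have
\[
R(v_\theta,\bar v_\theta,v_\theta,\bar v_\theta)\ge 2|v_\theta|^4=2\Big(\sum_i a_i\Big)^2
\]
for every $\theta$. We integrate this inequality over the torus against normalized Haar measure.

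\emph{Expanding the left side.} The left side is
\[
\sum_{i,j,k,l}R_{i\bar j k\bar l}\,b_ib_jb_kb_l\,e^{\sqrt{-1}(\theta_i-\theta_j+\theta_k-\theta_l)}.
\]
After averaging, only the terms with $\{i,k\}=\{j,l\}$ as multisets survive. These are the cases $i=j,\ k=l$ and $i=l,\ k=j$, with the overlap $i=j=k=l$ counted once. By the Kähler symmetry $R_{i\bar j k\bar l}=R_{k\bar j i\bar l}$, both families give $\sum_{i,k}R_{i\bar i k\bar k}a_ia_k$. The average of the left side is therefore
\[
2\sum_{i,j}R_{i\bar i j\bar j}a_ia_j-\sum_i R_{i\bar i i\bar i}a_i^2 .
\]
Using $R_{i\bar i i\bar i}=H(e_i)\ge 2$ (holomorphic sectional curvature of a unit vector), we get
\[
2\sum_{i,j}R_{i\bar i j\bar j}a_ia_j\ \ge\ 2\Big(\sum_i a_i\Big)^2+\sum_i R_{i\bar i i\bar i}a_i^2\ \ge\ 2\Big(\sum_i a_i\Big)^2+2\sum_i a_i^2 .
\]
Dividing by $2$ yields \eqref{eq:phase1}.

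\emph{From \eqref{eq:phase1} to \eqref{eq:phase2}.} By Cauchy--Schwarz, $\sum_i a_i^2\ge \frac1n(\sum_i a_i)^2$, and \eqref{eq:phase2} follows immediately.

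\emph{Main obstacle.} The only delicate point is the bookkeeping in the phase average. One must verify that exactly the index patterns $i=j,\ k=l$ and $i=l,\ k=j$ survive, and handle the diagonal overlap correctly so the coefficient $2$ versus $1$ comes out right. One must also use the correct Kähler symmetry, so that the $i=l,\ k=j$ terms reduce to $R_{i\bar i k\bar k}$ under the paper's convention $R(u,\bar u,v,\bar w)$. To double-check, I would test the computation on $\C\PP^n$ with $H\equiv2$. There $R_{i\bar i j\bar j}=1+\delta_{ij}$, so the left side of \eqref{eq:phase1} equals $(\sum a_i)^2+\sum a_i^2$, giving equality. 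This confirms the constants.
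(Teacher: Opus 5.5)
Your proposal is correct and follows essentially the same argument as the paper: average $R(z_\theta,\overline{z_\theta},z_\theta,\overline{z_\theta})$ over the torus with $z_\theta=\sum_i\sqrt{a_i}\,e^{\sqrt{-1}\theta_i}e_i$, keep only the index patterns where the multisets $\{i,k\}$ and $\{j,\ell\}$ agree, and correct for the doubly counted diagonal. Then use $R_{i\bar i i\bar i}=H_\omega(e_i)\geq 2$ to get \eqref{eq:phase1}, and Cauchy--Schwarz to get \eqref{eq:phase2}.
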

\begin{proof}
If all $a_i$ vanish there is nothing to prove.  Otherwise set
\[
 z_\theta=\sum_i\sqrt{a_i}\,e^{\sqrt{-1}\theta_i}e_i,
 \qquad \theta\in\mathbb T^n,
\]
and let $d\theta$ denote normalized Haar measure on $\mathbb T^n$.
The curvature hypothesis gives
\begin{equation}\label{eq:ztheta}
 R(z_\theta,\overline{z_\theta},z_\theta,\overline{z_\theta})
 \geq2\left(\sum_i a_i\right)^2.
\end{equation}
In the expansion of the left side, the phase factor attached to
$R_{i\bar j k\bar\ell}$ is
$e^{\sqrt{-1}(\theta_i-\theta_j+\theta_k-\theta_\ell)}$.  Its average over $\mathbb T^n$  is
nonzero exactly when the multisets $\{i,k\}$ and $\{j,\ell\}$ agree.  The two
pairings contribute the same term by the K\"ahler symmetries, while the fully
diagonal term is counted twice.  Thus
\begin{equation}\label{eq:phaseexpand}
 \int_{\mathbb T^n}R(z_\theta,\overline{z_\theta},z_\theta,
 \overline{z_\theta})\,d\theta
 =2\sum_{i,j}R_{i\bar i j\bar j}a_i a_j
 -\sum_iR_{i\bar i i\bar i}a_i^2.
\end{equation}
Since $R_{i\bar i i\bar i}=H_\omega(e_i)\geq2$, comparison of
\eqref{eq:ztheta} and \eqref{eq:phaseexpand} proves \eqref{eq:phase1}.
Cauchy--Schwarz gives \eqref{eq:phase2}.
\end{proof}
\begin{remark}
For a complex space form with constant holomorphic sectional curvature two,
\[
 R_{i\bar j k\bar\ell}
 =\delta_{ij}\delta_{k\ell}+\delta_{i\ell}\delta_{kj},
\]
and equality holds in \eqref{eq:phase1}.
\end{remark}

\section{A quantitative vanishing theorem}
The main goal of this section is to prove the following sharp vanishing theorem. 
\begin{theorem}\label{thm:threshold}
Let $(X,\omega)$ be a compact $n$-dimensional K\"ahler manifold satisfying $$\mu_{RC}(x) \geq \frac{n+1}{n},~\forall x\in X,$$ and  let $(L,h)$ be a Hermitian holomorphic line bundle satisfying
\begin{equation}\label{eq:linebound}
 \sqrt{-1}\Theta_h(L)\leq\kappa\omega,
 \qquad \kappa\geq0.
\end{equation}
If integers $m\geq1$, $q\geq0$, satisfy $$m > \frac{n\kappa}{n+1}q,$$ then 
\[
H^0\!\left(X,(\Omega_X^1)^{\otimes m}\otimes L^{\otimes q}\right) = 0.
\]

\end{theorem}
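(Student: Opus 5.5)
Suppose $s$ is a nonzero holomorphic section of $E:=(\Omega_X^1)^{\otimes m}\otimes L^{\otimes q}$, equipped with the metric induced by $\omega$ and $h$. The plan is to run a maximum-principle argument at a maximum point $x_0$ of $|s|^2$, in the spirit of Yang's RC method. The pointwise curvature inequality from $\mu_{RC}\geq\frac{n+1}{n}$ will contradict $\partial\bar\partial|s|^2\leq 0$ there.

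At $x_0$, the Hessian of $|s|^2$ is nonpositive, so for every $u\in T^{1,0}_{x_0}X$ the Bochner formula \eqref{eq:bochner} gives
\[
\langle R^E(u,\bar u)s,s\rangle \geq |\nabla'_us|^2\geq 0 .
\]
Averaging against any $B\in\sB_{x_0}$ gives $\langle R^E_B s,s\rangle\geq 0$ for all such $B$.

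Next we compute $R^E_B$. For $\Omega^1$ the curvature is minus the transpose of $R^{T}$. On $(\Omega^1)^{\otimes m}$ it acts as a derivation, giving $-\sum_{k=1}^m (R_B^T)^{t}$ acting in the $k$-th slot. The line-bundle factor contributes $q\,R^L_B\leq q\kappa\,\mathrm{tr}(B)=q\kappa$. So
\[
\langle R^E_Bs,s\rangle \leq -\sum_{k=1}^m \langle (R^T_B)^t_{(k)}s,s\rangle + q\kappa|s|^2 .
\]
Each term can be written as $\langle (R^T_B)^t_{(k)}s,s\rangle=|s|^2\operatorname{tr}(A_k R^{T}_B)$. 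Here $A_k\in\sB_{x_0}$ is the normalized partial contraction of $s\otimes\bar s$ over all slots except the $k$-th (suitably conjugated/transposed to live on $T^{1,0}$). It is positive semidefinite with trace one after dividing by $|s|^2$. Hence
\[
0\leq -|s|^2\sum_k\operatorname{tr}(A_kR^T_B)+q\kappa|s|^2
\quad\text{for every } B\in\sB_{x_0}.
\]

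The key step is to choose a single $B$ that is good for all $m$ slots simultaneously. Set $A:=\frac1m\sum_kA_k\in\sB_{x_0}$, which is convex. By the minimax form \eqref{mm}, there is a $B\in\sB_{x_0}$ with $\operatorname{tr}(AR^T_B)\geq\mu_{RC}(x_0)\geq\frac{n+1}{n}$. Indeed, $\max_B\operatorname{tr}(AR_B)\geq\min_{A'}\max_B=\mu_{RC}$, and this is exactly why the averaged invariant is used. With this $B$ we get $\sum_k\operatorname{tr}(A_kR^T_B)=m\operatorname{tr}(AR^T_B)\geq m\frac{n+1}{n}$, so
\[
0\leq \Bigl(q\kappa-\tfrac{n+1}{n}m\Bigr)|s|^2(x_0)<0,
\]
using $m>\frac{n\kappa}{n+1}q$ and $s(x_0)\neq 0$. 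This contradiction proves that $s=0$.

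The main obstacle is the bookkeeping in the middle step. One must check the sign conventions: the dual bundle has curvature $-R^t$, and the hypothesis bounds $\Theta_h$ only from above, which is the correct direction here. One must also verify that the contraction $A_k$ really is a trace-one nonnegative Hermitian endomorphism of $T^{1,0}$ pairing correctly with $R^T_B$, rather than with its transpose. To settle this I would work in a unitary frame, writing $s=s_{i_1\dots i_m}dz^{i_1}\otimes\cdots$. Then $(A_k)_{j\bar l}\propto\sum s_{\dots j\dots}\overline{s_{\dots l\dots}}$, and the curvature term equals $\sum R_{B,\,j\bar l}\,(A_k)_{\cdot}$ with matching indices, because $R_{i\bar jk\bar l}$ is Hermitian in the last pair.
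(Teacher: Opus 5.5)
Your proposal is correct and is essentially the paper's proof: a maximum-principle argument at a maximum point of $|s|^2$, where your averaged contraction $A=\frac1m\sum_k A_k$ is exactly the paper's $\frac1m\sum_\alpha C_\alpha^*C_\alpha$, and the minimax characterization of $\mu_{RC}$ supplies one direction that works for all $m$ slots simultaneously. The only difference is cosmetic: you take a general $B$ from \eqref{mm} and average the Bochner inequality over it, whereas the paper uses the rank-one form \eqref{unit} to get a single unit vector $u$; your transpose worry is also harmless, since $A^{t}=\bar A$ lies in $\sB_{x_0}$ as well.
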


\begin{proof}
We argue by contradiction.  Suppose
\[
 0\neq s\in H^0(X,E),
 \qquad
 E=(\Omega_X^1)^{\otimes m}\otimes L^{\otimes q}.
\]
Let $x$ be a point where $|s|^2$ attains its positive maximum, and rescale
$s$ so that $|s(x)|=1$.  Formula~\eqref{eq:bochner} gives, for every
$u\in T_x^{1,0}X$,
\[
 0\geq(\partial\bar\partial|s|^2)(u,\bar u)
 =|\nabla'_u s|^2-\langle R^E(u,\bar u)s,s\rangle.
\]
Hence
\begin{equation}\label{eq:bochner-max-point}
 \langle R^E(u,\bar u)s,s\rangle\geq0.
\end{equation}

Put $V=T_x^{1,0}X$.  Choose a unit vector
$l\in L_x^{\otimes q}$ and write
\[
 s(x)=T\otimes l,
 \qquad T\in(V^*)^{\otimes m},
 \qquad |T|=1.
\]
For each $\alpha\in\{1,\ldots,m\}$, let
$C_\alpha:V\to(V^*)^{\otimes(m-1)}$ be contraction of $T$ in the
$\alpha$-th slot, and define
\begin{equation}\label{eq:density}
 A=\sum_{\alpha=1}^m C_\alpha^*C_\alpha.
\end{equation}
Then $A$ is Hermitian positive semidefinite and
$\operatorname{tr}A=m$.  Since duality reverses the curvature sign,
\begin{equation}\label{eq:densitycurv}
 \left\langle R^{(V^*)^{\otimes m}}_{u\bar u}T,T\right\rangle
 =-\operatorname{tr}(AR_{u\bar u}).
\end{equation}
Apply Remark~\ref{rem:min-max} (\ref{unit}) to the unit-trace endomorphism $A/m$.
There is a unit vector $u\in V$ such that
\[
 \operatorname{tr}(AR_{u\bar u})\geq m\frac{n+1}{n}.
\]
For this $u$, the line-bundle curvature bound gives
\[
 \langle R^E(u,\bar u)s,s\rangle  = \left\langle R^{(V^*)^{\otimes m}}_{u\bar u}T,T\right\rangle +
\langle R^{L^{\otimes q}}_{u\bar u} l, l \rangle  \leq-m\frac{n+1}{n}+q\kappa.
\]
Together with \eqref{eq:bochner-max-point}, this implies
$m\leq n\kappa q/(n+1)$, contradicting the hypothesis.
\end{proof}
An immediate consequence is the following:
\begin{corollary}\label{cor:cotensor}
Let $(X,\omega)$ be a compact $n$-dimensional K\"ahler manifold satisfying $$\mu_{RC}(x) \geq \frac{n+1}{n},~\forall x\in X.$$ Then for every $m\geq1$,
\[
 H^0\!\left(X,(\Omega_X^1)^{\otimes m}\right)=0.
\]
Consequently we have that $H^{p,0}(X)=0$ for $1\leq p\leq n$. In particular, by Hodge decomposition we also have,
\begin{equation}\label{eq:H2}
 H^2(X,\R)=H^{1,1}(X,\R).
\end{equation}
\end{corollary}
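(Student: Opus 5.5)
The plan is to apply Theorem~\ref{thm:threshold} with the trivial line bundle. Take $L=\mathcal O_X$ with the flat metric $h\equiv 1$. Then $\sqrt{-1}\Theta_h(L)=0\leq 0\cdot\omega$, so \eqref{eq:linebound} holds with $\kappa=0$. Choose $q=0$. For every $m\geq1$ the threshold condition $m>\frac{n\kappa}{n+1}q=0$ is satisfied. Theorem~\ref{thm:threshold} therefore gives
\[
H^0\bigl(X,(\Omega_X^1)^{\otimes m}\bigr)=0 .
\]
(One could instead take $q\geq 1$ with $\kappa=0$; the conclusion is the same.)

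Next I would pass from tensor powers to exterior powers. For $1\leq p\leq n$, the bundle $\Omega_X^p=\Lambda^p\Omega_X^1$ is a holomorphic direct summand of $(\Omega_X^1)^{\otimes p}$. Antisymmetrization gives a holomorphic splitting, since we are in characteristic zero. Hence a nonzero holomorphic $p$-form would be a nonzero section of $(\Omega_X^1)^{\otimes p}$, which was just excluded. So $H^0(X,\Omega_X^p)=0$. On a compact Kähler manifold Dolbeault's theorem gives $H^{p,0}(X)\cong H^0(X,\Omega^p_X)$, and therefore $H^{p,0}(X)=0$ for $1\leq p\leq n$.

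Finally, the Hodge decomposition gives
\[
H^2(X,\C)=H^{2,0}\oplus H^{1,1}\oplus H^{0,2}, \qquad H^{0,2}=\overline{H^{2,0}}.
\]
Both of the outer summands vanish, so $H^2(X,\C)=H^{1,1}(X)$. Taking real points, that is, classes fixed by conjugation, yields \eqref{eq:H2}.

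None of these steps is a real obstacle. The only point needing care is that $\kappa=0$ is allowed in Theorem~\ref{thm:threshold}; the theorem explicitly permits $\kappa\geq0$ and $q\geq0$, so it is.
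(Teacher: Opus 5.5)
Your proposal is correct and is exactly the argument the paper leaves implicit when it calls this an immediate consequence. You apply Theorem~\ref{thm:threshold} to the trivial line bundle ($q=0$, $\kappa=0$), use that $\Omega_X^p$ is a holomorphic direct summand of $(\Omega_X^1)^{\otimes p}$, and conclude via the Hodge decomposition of $H^2(X,\C)$. A minor point: $H^{p,0}(X)\cong H^0(X,\Omega_X^p)$ holds on any complex manifold, so the K\"ahler hypothesis is needed only for the Hodge decomposition step.
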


\begin{remark}[Relation with the work of Yang]
 Yang proved that positive holomorphic sectional curvature implies uniform RC-positivity of $T_X$, and used this to obtain qualitative vanishing results for holomorphic cotensors, as well as projectivity and rational connectedness ([8,9]). In the present argument, the qualitative consequence needed later is
$
H^2(X,\mathbb R)=H^{1,1}(X,\mathbb R),
$
as recorded in Corollary 3.2. Theorem 3.1 strengthens the cotensor vanishing to a quantitative statement for tensors twisted by powers of a line bundle: under the curvature bound
$
\sqrt{-1}\Theta_h(L)\le \kappa\omega,
$
a nonzero section of
$
(\Omega_X^1)^{\otimes m}\otimes L^{\otimes q}
$
can exist only if
$
m\le \frac{n\kappa}{n+1},q.
$
This sharp linear bound is the input used in the jet-counting argument of the next section.
\end{remark}

\section{A refined Poincar\'e--Siegel type argument}

The next result contains no curvature hypothesis.  It converts any linear
vanishing threshold for line-bundle-valued cotensors into a simultaneous
bound for complete linear systems with an arbitrary fixed cotensor factor. One can think of it as a refinement of the classical Poincaré–Siegel (or Serre–Siegel, in the compact line-bundle setting) jet-counting argument. The use of exterior products of jets is closely related to the Wronskian construction for linear systems. The additional ingredient here is to retain the total filtration weight of the leading jet determinant and estimate its minimal value by a simplex first-moment argument.

\begin{proposition}[Jet estimate with a fixed cotensor factor]\label{prop:jets}
Let $X$ be a compact complex manifold of dimension $n$, let $L$ be
a holomorphic line bundle, and let $c\geq0$.  Assume that for all integers
$m,q\geq1$,
\begin{equation}\label{eq:abstractthreshold}
 H^0\!\left(X,(\Omega_X^1)^{\otimes m}\otimes L^{\otimes q}\right)\neq0
 \quad\Longrightarrow\quad m\leq cq.
\end{equation}
For an integer $s\geq0$, put
\[
 \mathcal E_s:=(\Omega_X^1)^{\otimes s},
 \qquad
 r_s:=\operatorname{rk}(\mathcal E_s)=n^s,
\]
where $\mathcal E_0=\mathcal O_X$.  Then
\begin{equation}\label{eq:abstracth0}
 h^0(X,\mathcal E_s\otimes L)
 \leq
 \frac{r_s}{n!}
 \left(
  \frac{n+1}{n}(c-s)+\frac{n+1}{2}
 \right)_+^n,
\end{equation}
where $t_+:=\max\{t,0\}$.
\end{proposition}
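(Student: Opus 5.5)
The plan is a Wronskian-type construction. Take the $N$ sections, pass to their jets, and wedge them together. The leading term of this wedge is a nonzero twisted cotensor of a definite total weight, and \eqref{eq:abstractthreshold} forces that weight to be small. A lattice-point count then forces $N$ to be small.

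Put $F=\mathcal E_s\otimes L$ and $N=h^0(X,F)$, and assume $N\geq1$. Choose a basis $\sigma_1,\dots,\sigma_N$ of $H^0(X,F)$. For $k$ large, the jet evaluation $H^0(X,F)\to J^k(F)_x$ is injective at a general point $x$, since a nonzero section cannot vanish to infinite order. The bundle $J^k(F)$ carries the vanishing-order filtration, with graded pieces
\[
\operatorname{Gr}^j=\operatorname{Sym}^j\Omega^1_X\otimes F,\qquad 0\leq j\leq k .
\]
Consider the global holomorphic section $\sigma_1\wedge\dots\wedge\sigma_N$ of $\Lambda^N J^k(F)$; it is nonzero. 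The induced filtration on $\Lambda^N J^k(F)$ has graded pieces
\[
\bigotimes_j \Lambda^{n_j}\operatorname{Gr}^j,\qquad \sum_j n_j=N,
\]
with total weight $w=\sum_j j\,n_j$. Let $w_0$ be the smallest weight such that the wedge lies in the corresponding filtration step. Its image in the graded piece is then a nonzero holomorphic section of a sum of such tensor products.

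Each $\Lambda^{n_j}\operatorname{Gr}^j$ is a direct summand of a tensor power, because in characteristic zero $\operatorname{Sym}$ and $\Lambda$ split off tensor powers. So this leading term gives a nonzero section of
\[
(\Omega_X^1)^{\otimes(Ns+w_0)}\otimes L^{\otimes N}.
\]
When $Ns+w_0\geq1$, hypothesis \eqref{eq:abstractthreshold} with $m=Ns+w_0$ and $q=N$ gives
\[
Ns+w_0\leq cN,\qquad\text{that is,}\qquad \frac{w_0}{N}\leq c-s .
\]
If $Ns+w_0=0$, then $s=0$ and $w_0=0$, so $N=1$, and the bound \eqref{eq:abstracth0} holds trivially because of the $\tfrac{n+1}{2}$ term. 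In particular, if $s>c$ then $N=0$.

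Next comes the lower bound on $w_0$. After a general linear change of basis, filter $H^0$ by vanishing order at $x$. The quotient of sections vanishing to order exactly $j$ injects into $\operatorname{Gr}^j_x$, which has dimension $r_s\binom{j+n-1}{n-1}$. Hence the multiplicities $n_j$ appearing in the leading term satisfy
\[
n_j\leq r_s\binom{j+n-1}{n-1}.
\]
It follows that $w_0$ is at least the greedy minimum: the sum of the $N$ smallest weights in the multiset of lattice points $\alpha\in\Z^n_{\geq0}$, weighted by $|\alpha|$, with each point repeated $r_s$ times.

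The last step, the \emph{simplex first-moment estimate}, is the one I expect to be the main obstacle. It must show that if the $N$ smallest weights have average at most $c-s$, then
\[
N\leq \frac{r_s}{n!}\Big(\tfrac{n+1}{n}(c-s)+\tfrac{n+1}{2}\Big)^n .
\]
I would replace each lattice point $\alpha$ by the unit cube $\alpha+[0,1)^n$. On that cube the linear function $|y|$ exceeds $|\alpha|$ by exactly $n/2$ on average, and the union of the first $N/r_s$ cubes (in order of weight) is a region of volume $N/r_s$. Among regions of a given volume, the first moment of $|y|$ over the positive orthant is minimized by the simplex $\{|y|\leq t\}$ with $t^n/n!=N/r_s$, and that minimum equals $\tfrac{n}{n+1}t\cdot\tfrac{N}{r_s}$. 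Combining,
\[
\frac{w_0}{N}+\frac n2\;\geq\;\frac{n}{n+1}\,t .
\]
Rearranging gives $t\leq \tfrac{n+1}{n}(c-s)+\tfrac{n+1}{2}$, which is \eqref{eq:abstracth0}. The care needed is in checking the cube-shift constant $n/2$ and the rearrangement (bathtub) inequality for the partially filled last weight level; I expect this is exactly where the $\tfrac{n+1}{2}$ comes from.
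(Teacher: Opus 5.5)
Your proposal is correct and is essentially the paper's proof: you wedge the jets of a basis in $\bigwedge^N J^r(\mathcal E_s\otimes L)$, embed the leading term of weight $M=\sum_p p\,a_p$ into $(\Omega_X^1)^{\otimes(sN+M)}\otimes L^{\otimes N}$, use the threshold to get $sN+M\leq cN$, and finish with the unit-cube/simplex first-moment bound with shift $n/2$. The differences are cosmetic. You absorb the $r_s$-fold multiplicity into a single density of mass $N/r_s$ via the bathtub principle, whereas the paper uses $r_s$ regions $\mathcal U_\ell$ and convexity of $t^{1+1/n}$; the bound $n_j\leq r_s\binom{j+n-1}{n-1}$ needs no pointwise argument, since $\Lambda^{n_j}\operatorname{Gr}^j=0$ beyond the rank; and $w_0$ should be the \emph{largest} $w$ with the wedge in $\mathcal F^w$, not the smallest.
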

In particular, the case $s=0$ gives the line-bundle estimate
\[
 h^0(X,L)\leq\frac1{n!}
 \left(\frac{n+1}{n}c+\frac{n+1}{2}\right)^n.
\]
Before we present the proof, we discuss some generalities. Let $E$ be a holomorphic vector bundle on $X$ equipped with a decreasing filtration $\mathcal{F}$ of holomorphic sub-bundles: $$E =  \mathcal F^0E \supset \mathcal F^1E\supset\cdots\supset \mathcal F^rE \supset \mathcal F^{r+1}E = 0.$$ The corresponding graded bundle is defined by $$\mathrm{gr}(\mathcal{F}):= \bigoplus_{p=0}^r \mathcal F^pE\slash \mathcal F^{p+1}E.$$ Given any $\sigma\in H^0(X,E)$ the degree of $\sigma$ (with respect to the filtration) is defined by $$p(\sigma) = \max\{p ~|~ \sigma\in H^0(X,\mathcal F^pE)\}.$$ Then the {\em leading or initial term} of $\sigma$, denoted by $[\sigma] \in H^0(X,\mathrm{gr}(\mathcal{F}))$ is then defined by $i_{p(\sigma)}\circ \pi_{p(\sigma)}\circ \sigma$, where $$\pi_p: \mathcal F^pE\rightarrow \mathcal F^pE\slash \mathcal F^{p+1}E\text{ and }i_p: \mathcal F^pE\slash \mathcal F^{p+1}\hookrightarrow \mathrm{gr}(\mathcal{F}). $$

\vspace{2mm}
\begin{proof}[Proof of the Proposition]
Put \(\mathcal G:=\mathcal E_s\otimes L\), \(V=H^0(X,\mathcal G)\) and \(N=\dim_{\mathbb C}V\). If \(N=0\), the assertion is immediate, so assume \(N>0\). Let \(J^r\mathcal G\) denote the \(r\)-th jet bundle of \(\mathcal G\). There is a natural linear map \[ j^r:V\longrightarrow H^0(X,J^r\mathcal G),\qquad \tau\longmapsto j^r\tau. \] At any \(x\in X\), after choosing local holomorphic coordinates \((z^1,\ldots,z^n)\) centered at \(x\), a local holomorphic frame \(e_1,\ldots,e_{r_s}\) of \(\mathcal E_s\), and a local nonvanishing holomorphic section $l$ of \(L\), and writing \(\tau=\sum_{a=1}^{r_s}f^ae_a\otimes l \), its value at \(x\) is represented by \[ j_x^r\tau= \bigl(f^a(x),\partial_1f^a(x),\ldots,\partial_nf^a(x),\ldots, \partial_If^a(x),\ldots\bigr)_{1\leq a\leq r_s,\ |I|\leq r}. \] We now fix \(x\in X\). The kernels of the maps \[ j_x^r:V\longrightarrow J_x^r\mathcal G \] form a descending sequence of subspaces of \(V\), and their intersection is zero. Indeed, a section belonging to every \(\ker j_x^r\) has all of its derivatives vanishing at \(x\), and hence its Taylor series at \(x\) vanishes identically. It therefore vanishes on a neighborhood of \(x\), and hence, by connectedness, on all of \(X\). Since \(V\) is finite-dimensional, the descending sequence \(\ker j_x^r\) stabilizes, and therefore \(j_x^r\) is injective for all sufficiently large \(r\). Fix such an \(r\). If \(\tau_1,\ldots,\tau_N\) is a basis of \(V\), then \[ \sigma= j^r\tau_1\wedge\cdots\wedge j^r\tau_N \in H^0\!\left(X,\bigwedge^N J^r\mathcal G\right) \] is nonzero, since \[ \sigma(x)= j_x^r\tau_1\wedge\cdots\wedge j_x^r\tau_N\neq0 \] by the injectivity of \(j_x^r\). We next recall the natural filtration on the jet bundle. There are truncation maps \[ \phi_p:J^r\mathcal G\longrightarrow J^p\mathcal G,\qquad 0\leq p\leq r, \] obtained by forgetting the jets of orders \(p+1,\ldots,r\). We set \[ \mathcal F^0J^r\mathcal G:=J^r\mathcal G, \qquad \mathcal F^pJ^r\mathcal G:=\ker\phi_{p-1} \quad (1\leq p\leq r), \qquad \mathcal F^{r+1}J^r\mathcal G:=0. \] This gives a decreasing filtration \[ J^r\mathcal G=\mathcal F^0J^r\mathcal G \supset \mathcal F^1J^r\mathcal G \supset\cdots\supset \mathcal F^rJ^r\mathcal G \supset \mathcal F^{r+1}J^r\mathcal G=0. \] Recall that the jet bundles fit into the standard short exact sequences \cite{Vakil} \[ 0\longrightarrow \operatorname{Sym}^p\Omega_X^1\otimes\mathcal G \longrightarrow J^p\mathcal G \longrightarrow J^{p-1}\mathcal G \longrightarrow0, \qquad p\geq1, \] with \(J^0\mathcal G=\mathcal G\). The map \(J^p\mathcal G\to J^{p-1}\mathcal G\) is the natural truncation map which forgets the derivatives of order \(p\). Its kernel is canonically identified with \[ \operatorname{Sym}^p\Omega_X^1\otimes\mathcal G. \] More explicitly, for \(1\leq p\leq r\), the truncation map \[ \phi_p:J^r\mathcal G\longrightarrow J^p\mathcal G \] restricts to a surjective map \[ \mathcal F^pJ^r\mathcal G \longrightarrow \ker\!\left(J^p\mathcal G\longrightarrow J^{p-1}\mathcal G\right). \] Its kernel is precisely \[ \mathcal F^{p+1}J^r\mathcal G = \ker\!\left(J^r\mathcal G\longrightarrow J^p\mathcal G\right). \] Hence \[ \mathcal F^pJ^r\mathcal G/\mathcal F^{p+1}J^r\mathcal G \simeq \ker\!\left(J^p\mathcal G\longrightarrow J^{p-1}\mathcal G\right) \simeq \operatorname{Sym}^p\Omega_X^1\otimes\mathcal G. \] For \(p=0\), we similarly have \[ \mathcal F^0J^r\mathcal G/\mathcal F^1J^r\mathcal G = J^r\mathcal G/\ker(J^r\mathcal G\to J^0\mathcal G) \simeq J^0\mathcal G=\mathcal G = \operatorname{Sym}^0\Omega_X^1\otimes\mathcal G. \] Consequently, \[ \operatorname{gr}(J^r\mathcal G) \simeq \bigoplus_{p=0}^r \operatorname{Sym}^p\Omega_X^1\otimes\mathcal G. \] The filtration on \(J^r\mathcal G\) induces a decreasing filtration on \(\bigwedge^N J^r\mathcal G\). More precisely, we set \[ \mathcal F^k\!\left(\bigwedge^N J^r\mathcal G\right) := \operatorname{im}\left( \bigoplus_{p_1+\cdots+p_N\geq k} \mathcal F^{p_1}J^r\mathcal G\otimes\cdots\otimes \mathcal F^{p_N}J^r\mathcal G \longrightarrow \bigwedge^N J^r\mathcal G \right), \] where the map is the natural antisymmetrization map. With respect to this induced filtration there is a canonical isomorphism \[ \operatorname{gr}\!\left(\bigwedge^N J^r\mathcal G\right) \simeq \bigwedge^N\operatorname{gr}(J^r\mathcal G). \] Using the description of the associated graded bundle of \(J^r\mathcal G\), this becomes \[ \operatorname{gr}\!\left(\bigwedge^N J^r\mathcal G\right) \simeq \bigwedge^N\left( \bigoplus_{p=0}^r \operatorname{Sym}^p\Omega_X^1\otimes\mathcal G \right). \]
For a direct sum one has the canonical decomposition \[ \bigwedge^N\left( \bigoplus_{p=0}^r E_p \right) \simeq \bigoplus_{\substack{a_0+\cdots+a_r=N\\ 0\leq a_p\leq \operatorname{rk}E_p}} \bigotimes_{p=0}^r\bigwedge^{a_p}E_p. \] Taking \[ E_p:=\operatorname{Sym}^p\Omega_X^1\otimes\mathcal G, \] we obtain \[ \operatorname{gr}\!\left(\bigwedge^N J^r\mathcal G\right) \simeq \bigoplus_{\substack{a_0+\cdots+a_r=N\\ 0\leq a_p\leq r_sb_p}} \bigotimes_{p=0}^r \bigwedge^{a_p} \left(\operatorname{Sym}^p\Omega_X^1\otimes\mathcal G\right), \] where \[ \operatorname{rk}E_p=r_sb_p,\qquad b_p= \operatorname{rk}\left(\operatorname{Sym}^p\Omega_X^1\right) = \binom{n+p-1}{n-1}. \] We now apply the leading-term construction to the nonzero section \[ \sigma\in H^0\!\left(X,\bigwedge^N J^r\mathcal G\right). \] Let $M$ denote the total filtration weight of $\sigma$ with respect to the induced filtration. \[ [\sigma]\in H^0\!\left( X, \operatorname{gr}\!\left(\bigwedge^N J^r\mathcal G\right) \right) \] is nonzero and lies in the degree-\(M\) graded piece. Hence at least one of its components in the above direct-sum decomposition is nonzero. Therefore there exist integers \[ a_0,\ldots,a_r, \qquad \sum_{p=0}^r a_p=N, \qquad 0\leq a_p\leq r_sb_p, \] such that \[ M=\sum_{p=0}^r p\,a_p \] and such that there is a nonzero section of \[ \bigotimes_{p=0}^r \bigwedge^{a_p} \left(\operatorname{Sym}^p\Omega_X^1\otimes\mathcal G\right). \] Since \[ \bigwedge^{a_p} \left(\operatorname{Sym}^p\Omega_X^1\otimes\mathcal G\right) \simeq \bigwedge^{a_p} \left(\operatorname{Sym}^p\Omega_X^1\otimes \mathcal E_s\right) \otimes L^{\otimes a_p}, \] and \[ \sum_{p=0}^r a_p=N, \] we obtain a nonzero section of \[ L^{\otimes N}\otimes \bigotimes_{p=0}^r \bigwedge^{a_p} \left(\operatorname{Sym}^p\Omega_X^1\otimes \mathcal E_s\right). \] Finally, the natural injective maps \[ \bigwedge^{a_p} \left(\operatorname{Sym}^p\Omega_X^1\otimes \mathcal E_s\right) \hookrightarrow \left(\operatorname{Sym}^p\Omega_X^1\otimes \mathcal E_s\right)^{\otimes a_p} \hookrightarrow (\Omega_X^1)^{\otimes (p+s)a_p} \] give a nonzero section \[ 0\neq\tau\in H^0\!\left( X, (\Omega_X^1)^{\otimes(sN+M)}\otimes L^{\otimes N} \right), \qquad M=\sum_{p=0}^r p\,a_p. \]

If $sN+M>0$, hypothesis~\eqref{eq:abstractthreshold}, applied with line exponent
$N$, gives
\begin{equation}\label{eq:Mupper}
 sN+M\leq cN;
\end{equation}
for $sN+M=0$ this is automatic.

It remains to bound $M$ from below.  Locally, the degree-$p$ graded piece 
$\operatorname{Sym}^p\Omega_X^1 \otimes \mathcal E_s \otimes L$ has the $r_sb_p$ basis vectors 
$(dz)^\alpha \otimes e_\ell \otimes l$, \ $1 \le \ell \le r_s$, \ $\vert \alpha \vert =p$.  
We call these the $r_sb_p$ available slots at level $p$. The exterior factor $\wedge^{a_p}$
selects $a_p$ distinct such slots. Enumerate the $r_sb_p$ available slots
at level $p$ by pairs $(\ell,\alpha)$, where
\[
 1\leq\ell\leq r_s,
 \qquad
 \alpha\in\mathbb Z_{\geq0}^n,
 \qquad
 |\alpha|=p.
\]
Assign the $a_p$ selected slots above to distinct
such pairs.  For each $\ell$, let $\mathcal U_\ell$ be the union of the unit
cubes $\alpha+[0,1)^n$ corresponding to the selected pairs with first
coordinate $\ell$, and write
\[
 N_\ell:=|\mathcal U_\ell|,
 \qquad
 \ell_0(y):=y_1+\cdots+y_n.
\]
Then
\[
 \sum_{\ell=1}^{r_s}N_\ell=N,
 \qquad
 \sum_{\ell=1}^{r_s}
 \int_{\mathcal U_\ell}\ell_0(y)\,dy
 =M+\frac n2N.
\]
Among measurable subsets of $\mathbb R_{\geq0}^n$ of volume $v$, the
integral of $\ell_0$ is minimized by the simplex
$\{\ell_0\leq(n!v)^{1/n}\}$.  Hence
\[
 \int_{\mathcal U_\ell}\ell_0(y)\,dy
 \geq
 \frac{n}{n+1}(n!N_\ell)^{1/n}N_\ell.
\]
Since $t\mapsto t^{1+1/n}$ is convex,
\[
 \sum_{\ell=1}^{r_s}N_\ell^{1+1/n}
 \geq
 r_s\left(\frac{N}{r_s}\right)^{1+1/n}.
\]
Therefore
\begin{equation}\label{eq:Mlower}
 \frac{M}{N}
 \geq
 \frac{n}{n+1}
 \left(\frac{n!N}{r_s}\right)^{1/n}
 -\frac n2.
\end{equation}
Combining \eqref{eq:Mupper} and \eqref{eq:Mlower} gives
\[
 \left(\frac{n!N}{r_s}\right)^{1/n}
 \leq
 \frac{n+1}{n}(c-s)+\frac{n+1}{2}.
\]
If $N>0$, the expression on the right must be positive.  Solving for $N$ and
including the case in which that expression is nonpositive proves
\eqref{eq:abstracth0}.
\end{proof}

We obtain the curvature form of the estimate by combining
Proposition~\ref{prop:jets} with Theorem~\ref{thm:threshold}.
\begin{corollary}\label{cor:h0}
Let $(X,\omega)$ be a compact $n$-dimensional K\"ahler manifold satisfying
\[
 \mu_{RC}(x)\geq\frac{n+1}{n}
 \qquad\text{for every }x\in X.
\]
For an integer $s\geq0$, put
\[
 \mathcal E_s=(\Omega_X^1)^{\otimes s},
 \qquad
 r_s=n^s.
\]
If a Hermitian holomorphic line bundle $(L,h)$ satisfies
\[
 \sqrt{-1}\Theta_h(L)\leq\kappa\omega,
\]
then
\begin{equation}\label{eq:h0}
 h^0(X,\mathcal E_s\otimes L)
 \leq
 \frac{r_s}{n!}
 \left(
  \kappa+\frac{n+1}{2}-\frac{n+1}{n}s
 \right)_+^n.
\end{equation}
\end{corollary}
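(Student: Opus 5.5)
The plan is to feed the vanishing threshold of Theorem~\ref{thm:threshold} into the abstract jet estimate of Proposition~\ref{prop:jets}, taking
\[
 c=\frac{n\kappa}{n+1}.
\]

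First I check hypothesis~\eqref{eq:abstractthreshold} for the bundle $L$ with its metric $h$. Take integers $m,q\geq1$ with $H^0(X,(\Omega_X^1)^{\otimes m}\otimes L^{\otimes q})\neq0$. Theorem~\ref{thm:threshold} applies here, because $\mu_{RC}\geq (n+1)/n$, $\sqrt{-1}\Theta_h(L)\leq\kappa\omega$, and $\kappa\geq0$. By contraposition it gives $m\leq \frac{n\kappa}{n+1}q=cq$. Since $c\geq0$, Proposition~\ref{prop:jets} applies. The only wrinkle is the sign of $\kappa$. Theorem~\ref{thm:threshold} assumes $\kappa\geq0$, so if the corollary is meant to allow negative $\kappa$, I would first replace $\kappa$ by $\max\{\kappa,0\}$. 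For negative $\kappa$ I would instead argue directly: Theorem~\ref{thm:threshold}'s maximum-principle proof then gives vanishing for all $m\geq1$, and $q=0$-type reasoning handles the rest. I would note this case separately; the main case is $\kappa\geq0$.

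Next I substitute $c$ into~\eqref{eq:abstracth0}. Since
\[
 \frac{n+1}{n}(c-s)=\kappa-\frac{n+1}{n}s,
\]
the bound becomes
\[
 h^0(X,\mathcal E_s\otimes L)\leq \frac{r_s}{n!}\Bigl(\kappa+\frac{n+1}{2}-\frac{n+1}{n}s\Bigr)_+^n,
\]
which is exactly~\eqref{eq:h0}.

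There is no real obstacle in this argument. The one point to watch is that the hypothesis of Proposition~\ref{prop:jets} is required only for $m,q\geq1$, which is precisely the range where Theorem~\ref{thm:threshold} gives the threshold.
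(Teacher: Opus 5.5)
Your proposal is correct and is exactly the paper's proof: Theorem~\ref{thm:threshold} supplies hypothesis \eqref{eq:abstractthreshold} with $c=n\kappa/(n+1)\geq 0$, and substituting this $c$ into \eqref{eq:abstracth0} yields \eqref{eq:h0}. The paper tacitly takes $\kappa\geq0$ (inherited from Theorem~\ref{thm:threshold}), so your $\kappa<0$ aside is unnecessary; if you keep it, note that replacing $\kappa$ by $\max\{\kappa,0\}$ would only give a weaker bound, whereas your direct maximum-principle argument works, since it forces $h^0(X,\mathcal E_s\otimes L)=0$.
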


\begin{proof}
Theorem~\ref{thm:threshold} gives \eqref{eq:abstractthreshold} with
$c=n\kappa/(n+1)$.  Substitution in \eqref{eq:abstracth0} proves
\eqref{eq:h0}.
\end{proof}

\begin{remark}[Fubini--Study check]
On $(\mathbb \C\PP^n, \omega_{\C\PP^n})$ one has $H\equiv2$.  For
$L=\mathcal O_{\mathbb P^n}(k)$, take $s=0$ in Corollary~\ref{cor:h0}.
The curvature bound has $\kappa=k$, and \eqref{eq:h0} becomes
\[
 \binom{n+k}{n}\leq\frac1{n!}
 \left(k+\frac{n+1}{2}\right)^n.
\]
This is the arithmetic--geometric mean inequality applied to
$k+1,\ldots,k+n$.  Although the inequality is strict for $n>1$, it is asymptotically sharp to the first two orders (in $k$): the two sides have identical $k^n$ and $k^{n-1}$ 
coefficients. These are precisely the orders used below for the volume inequality and rigidity arguments.
\end{remark}

\section{Putting the pieces together}

\subsection{Proof of the optimal volume bound in Theorem \ref{thm:main}}
Set
\[
 \beta=\frac{[\omega]}{2\pi}.
\]
By Corollary~\ref{cor:cotensor}, $H^2(X,\R)=H^{1,1}(X,\R)$, so we can choose rational $(1,1)$ classes
$\beta_\nu\in H^{1,1}(X,\Q)$ converging to $\beta$.  Let $\omega_\nu$ be the harmonic representative of $2\pi\beta_\nu$ with respect to the fixed K\"ahler metric $\omega$.  These are real forms of type $(1,1)$ and satisfy $\omega_\nu\to\omega$ smoothly.  For all large $\nu$, $\omega_\nu$ is K\"ahler.  Choose
$\varepsilon_\nu\downarrow0$ so that
\begin{equation}\label{eq:omega-upper}
 \omega_\nu\leq(1+\varepsilon_\nu)\omega.
\end{equation}

Choose $m_\nu\in\Z_{>0}$ such that $m_\nu\beta_\nu$ is integral.  By the
Lefschetz $(1,1)$ theorem \cite{Huybrechts}, there is a
holomorphic line bundle $L_\nu$ with
$c_1(L_\nu)=m_\nu\beta_\nu$.  By the $\partial\bar\partial$-lemma \cite{Huybrechts}, there is a Hermitian metric $h_\nu$ on $L_\nu$ such that
\begin{equation}\label{eq:Lnu-curv}
 \sqrt{-1}\Theta(L_\nu)=m_\nu\omega_\nu.
\end{equation} 
For $k\geq1$,
\[
 \sqrt{-1}\Theta(L_\nu^k)
 =km_\nu\omega_\nu
 \leq km_\nu(1+\varepsilon_\nu)\omega.
\]
Corollary~\ref{cor:h0}, with $s=0$ and
$\kappa=km_\nu(1+\varepsilon_\nu)$, gives
\begin{equation}\label{eq:hilbert-upper}
 h^0(X,L_\nu^k)
 \leq\frac1{n!}
 \left(km_\nu(1+\varepsilon_\nu)+\frac{n+1}{2}\right)^n.
\end{equation}
Since $L_\nu$ is ample, asymptotic Riemann--Roch and Serre vanishing \cite{Lazarsfeld} give
\begin{equation}\label{eq:RR}
 h^0(X,L_\nu^k)
 =\frac{c_1(L_\nu)^n}{n!}k^n+O(k^{n-1}).
\end{equation}
Divide \eqref{eq:hilbert-upper} by $k^n$ and let $k\to\infty$.  Comparison
with \eqref{eq:RR} yields
\begin{equation}\label{eq:c1bound}
 c_1(L_\nu)^n\leq\bigl(m_\nu(1+\varepsilon_\nu)\bigr)^n.
\end{equation}
But
\[
 c_1(L_\nu)^n
 =\frac{m_\nu^n}{(2\pi)^n}\int_X\omega_\nu^n.
\]
After canceling $m_\nu^n$ in \eqref{eq:c1bound},
\begin{equation}\label{eq:omeganubound}
 \int_X\omega_\nu^n\leq(2\pi)^n(1+\varepsilon_\nu)^n.
\end{equation}
Letting $\nu\to\infty$,
\begin{equation}\label{eq:H1bound}
 \int_X\omega^n\leq(2\pi)^n.
\end{equation}
Sharpness is immediate because $(\mathbb \C\PP^n, \omega_{\C\PP^n})$ attains equality.

\subsection{Equality and rigidity}
 We first prove the equality assertion in Theorem~\ref{thm:main} and then the rigidity statement in Corollary~\ref{cor:main}.  Throughout
this section,
\[
 c_0:=\frac{n+1}{n},
 \qquad
 S_\omega:=\operatorname{tr}_\omega\Ric_\omega
 =\sum_{i,j=1}^nR_{i\bar i j\bar j}
\]
denotes the complex scalar curvature.  Thus the Riemannian scalar curvature
is $2S_\omega$ with the conventions of this paper.

\begin{lemma}\label{lem:almost-integral}
Let $W$ be a finite-dimensional real vector space and let
$\Lambda\subset W$ be a full lattice.  For every $\alpha\in W$ there are
positive integers $k_j\to\infty$ and $\gamma_j\in\Lambda$ such that
\begin{equation}\label{eq:almost-integral}
 \gamma_j-k_j\alpha\longrightarrow0\qquad\text{in }W.
\end{equation}
\end{lemma}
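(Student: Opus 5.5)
This is Dirichlet's simultaneous approximation theorem, applied coordinatewise. The plan is to reduce to $W=\R^d$ and $\Lambda=\Z^d$, and then use a pigeonhole argument on the torus $\R^d/\Z^d$.

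\emph{Reduction.} Choose a $\Z$-basis $w_1,\ldots,w_d$ of $\Lambda$. Since $\Lambda$ is full, this is also an $\R$-basis of $W$. The linear isomorphism $W\to\R^d$ sending $w_i$ to the $i$-th standard basis vector carries $\Lambda$ onto $\Z^d$. It is a homeomorphism for the unique vector space topology on $W$, so convergence in \eqref{eq:almost-integral} may be checked in $\R^d$ with the sup norm. We may therefore assume $W=\R^d$, $\Lambda=\Z^d$, and write $\alpha=(\alpha_1,\ldots,\alpha_d)$.

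\emph{Pigeonhole step.} Fix an integer $Q\ge1$ and consider the $Q^d+1$ points $\{k\alpha\}\in[0,1)^d$ for $k=0,1,\ldots,Q^d$, where $\{\cdot\}$ denotes the coordinatewise fractional part. Partition $[0,1)^d$ into $Q^d$ half-open subcubes of side $1/Q$. Two of these points, say for indices $k'<k''$, lie in the same subcube. Set $k=k''-k'$, so that $1\le k\le Q^d$. Then there is $\gamma\in\Z^d$ with
\[
\|k\alpha-\gamma\|_\infty<1/Q.
\]
Hence for every $\varepsilon>0$ there exist a positive integer $k$ and $\gamma\in\Lambda$ with $\|\gamma-k\alpha\|<\varepsilon$.

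\emph{Forcing $k_j\to\infty$.} This is the only point requiring care, since the pigeonhole step alone might return the same $k$ repeatedly. There are two cases.

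\begin{itemize}
\item If $\alpha\in\Q^d$, let $q$ be a common denominator and take $k_j=jq$ and $\gamma_j=jq\alpha\in\Z^d$. Then $\gamma_j-k_j\alpha=0$ exactly.
\item Otherwise some coordinate $\alpha_i$ is irrational. Then $\|k\alpha-\gamma\|_\infty\ge|k\alpha_i-\gamma_i|>0$ for every $k\ge1$ and every $\gamma\in\Z^d$. For each fixed $K$, the quantity $\delta_K:=\min_{1\le k\le K}\operatorname{dist}(k\alpha,\Z^d)$ is therefore positive. Choose $Q_j$ with $1/Q_j<\min(\delta_j,1/j)$. The pigeonhole step with $Q=Q_j$ produces $k_j$ with $\operatorname{dist}(k_j\alpha,\Z^d)<\delta_j$, which forces $k_j>j$, and produces $\gamma_j$ with $\|\gamma_j-k_j\alpha\|<1/j$.
\end{itemize}

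In both cases $k_j\to\infty$ and $\gamma_j-k_j\alpha\to0$, which proves \eqref{eq:almost-integral}.
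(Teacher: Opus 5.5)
Your proof is correct. It takes a more hands-on route than the paper, though the two arguments have the same structure.

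The paper works directly on the compact torus $W/\Lambda$ and never chooses a basis. If the cyclic subgroup generated by the image of $\alpha$ is finite, it takes multiples of its order. Otherwise it uses that the closure of an infinite cyclic subgroup is an infinite compact subgroup, so $0$ is not isolated in it. This yields nonzero $m_j$ with $m_j\alpha\to0$ in $W/\Lambda$. These $m_j$ are automatically unbounded, because the finitely many nonzero elements $m\alpha$ with $|m|\le K$ stay away from $0$.

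You replace this compactness step by its finitary version, Dirichlet's pigeonhole argument on $Q^d$ subcubes. You then force $k_j\to\infty$ by hand through the positive quantities $\delta_K$, which makes explicit the unboundedness step that the paper leaves implicit. Your case split ($\alpha\in\Q^d$ or not) is exactly the paper's split (finite versus infinite order of $\alpha$ in $W/\Lambda$), written in coordinates.

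Your version gives a quantitative statement: some $1\le k\le Q^d$ and $\gamma\in\Z^d$ with $\|k\alpha-\gamma\|_\infty<1/Q$. That is more than the equality argument needs, since it only uses $\gamma_j-k_j\alpha\to0$ and $k_j\to\infty$. The paper's version is shorter and coordinate-free.
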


\begin{proof}
Consider the compact torus $W/\Lambda$ and the cyclic subgroup generated by
the image of $\alpha$.  If this subgroup is finite, then
$k_0\alpha\in\Lambda$ for some $k_0>0$, and one may take
$k_j=jk_0$ and $\gamma_j=k_j\alpha$.  If it is infinite, its compact closure
is an infinite compact subgroup, so the identity is not isolated.  Hence
there are nonzero integers $m_j$, unbounded in absolute value, for which
$m_j\alpha\to0$ in $W/\Lambda$.  Replacing $m_j$ by $|m_j|$ preserves
convergence to zero.  By the definition of convergence in the quotient, one
can choose $\gamma_j\in\Lambda$ such that
$\gamma_j-|m_j|\alpha\to0$ in $W$.  Taking $k_j=|m_j|$ proves
\eqref{eq:almost-integral}.
\end{proof}
We now prove the equality part of Theorem \ref{thm:main}. For the convenience of the reader we repeat the statement. 

\begin{theorem}
\label{thm:equality-average}
Let $(X,\omega)$ be a compact $n$-dimensional K\"ahler manifold satisfying
\[
 \mu_{RC}(x)\geq\frac{n+1}{n}
 \quad\text{for every }x\in X,
 \qquad
 \int_X\omega^n=(2\pi)^n.
\]
Then
\begin{equation}\label{eq:average-scalar-equality}
 \frac{\displaystyle\int_XS_\omega\,\omega^n}
      {\displaystyle\int_X\omega^n}
 =n(n+1).
\end{equation}
Equivalently,
\begin{equation}\label{eq:c1-alpha-equality}
 c_1(X)\cdot \left(\frac{[\omega]}{2\pi}\right)^{n-1}=n+1.
\end{equation}
\end{theorem}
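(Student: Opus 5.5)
The plan is to compare the second-order coefficients of Hirzebruch--Riemann--Roch with the upper bound of Corollary~\ref{cor:h0}, applied to suitable line bundles, using two values of the cotensor exponent $s$. A choice with $2s<n$ should give $c_1(X)\cdot\beta^{n-1}\le n+1$ and a choice with $2s>n$ the reverse inequality, where $\beta=[\omega]/2\pi$. The equivalence of \eqref{eq:average-scalar-equality} and \eqref{eq:c1-alpha-equality} is routine. One uses $S_\omega\,\omega^n=n\,\Ric_\omega\wedge\omega^{n-1}$ and $[\Ric_\omega]=2\pi c_1(X)$, so that $\int_X S_\omega\omega^n=n(2\pi)^n c_1(X)\cdot\beta^{n-1}$, together with $\int_X\omega^n=(2\pi)^n$, i.e.\ $\beta^n=1$.

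\emph{Step 1: line bundles approximating $k\beta$.} By Corollary~\ref{cor:cotensor}, $H^2(X,\R)=H^{1,1}(X,\R)$. Apply Lemma~\ref{lem:almost-integral} with $W=H^2(X,\R)$ and $\Lambda$ the image of $H^2(X,\Z)$ to obtain $k_j\to\infty$ and integral classes $\gamma_j$ with $\delta_j:=\gamma_j-k_j\beta\to0$. By Lefschetz $(1,1)$ there are line bundles $L_j$ with $c_1(L_j)=\gamma_j$. Using the $\omega$-harmonic representative of $\delta_j$ and the $\partial\bar\partial$-lemma, give $L_j$ metrics with
\[
\sqrt{-1}\Theta(L_j)=k_j\omega+2\pi\,\mathrm{harm}(\delta_j).
\]
Then $\sqrt{-1}\Theta(L_j)\le\kappa_j\omega$ with $\kappa_j=k_j+o(1)$. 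This $o(1)$ error, rather than a multiplicative $(1+\varepsilon)$, is the point of Lemma~\ref{lem:almost-integral}: it keeps the $k^{n-1}$ coefficient intact. Expanding binomially gives $\gamma_j^n=k_j^n+o(k_j^{n-1})$ and $c_1(X)\cdot\gamma_j^{n-1}=k_j^{n-1}c_1(X)\cdot\beta^{n-1}+o(k_j^{n-1})$.

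\emph{Step 2: $h^0=\chi$.} For $j$ large I will show that $H^i(X,(\Omega^1_X)^{\otimes s}\otimes L_j)=0$ for $i\ge1$. Write the bundle as $K_X\otimes E_j$ with
\[
E_j=(\Omega^1_X)^{\otimes s}\otimes K_X^{-1}\otimes L_j .
\]
With the product metric, the curvature of $E_j$ is $\ge(k_j-C_s)\omega\otimes\mathrm{Id}$ in the Nakano sense, so Nakano vanishing applies once $k_j>C_s$. HRR then gives
\[
h^0=\frac{n^s}{n!}\gamma_j^n+\frac{1}{(n-1)!}\Bigl(\frac{n^s}{2}\,c_1(X)\cdot\gamma_j^{n-1}+\operatorname{ch}_1\bigl((\Omega^1_X)^{\otimes s}\bigr)\cdot\gamma_j^{n-1}\Bigr)+O(k_j^{n-2}),
\]
with $\operatorname{ch}_1\bigl((\Omega^1_X)^{\otimes s}\bigr)=-s\,n^{s-1}c_1(X)$. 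The $k_j^{n-1}$ coefficient is therefore $n^{s-1}(n/2-s)\,c_1(X)\cdot\beta^{n-1}/(n-1)!$.

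\emph{Step 3: comparison.} Corollary~\ref{cor:h0} bounds $h^0$ by
\[
\frac{n^s}{n!}\Bigl(\kappa_j+\tfrac{n+1}{2}-\tfrac{n+1}{n}s\Bigr)^n .
\]
Its $k_j^n$ coefficient $n^s/n!$ matches the HRR coefficient exactly, because $\beta^n=1$ and $\kappa_j=k_j+o(1)$. Its $k_j^{n-1}$ coefficient is $n^{s-1}(n+1)(n/2-s)/(n-1)!$. Dividing the difference by $k_j^{n-1}$ and letting $j\to\infty$ gives
\[
\Bigl(\frac n2-s\Bigr)\bigl(c_1(X)\cdot\beta^{n-1}-(n+1)\bigr)\le0 .
\]
Taking $s=0$ yields $c_1(X)\cdot\beta^{n-1}\le n+1$, and taking $s=n$, so that $n/2-s<0$, yields $c_1(X)\cdot\beta^{n-1}\ge n+1$. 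Together these give \eqref{eq:c1-alpha-equality}.

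The main obstacle is Step~1 together with the uniformity in Step~2. Matching second-order terms requires that the leading terms agree up to $o(k^{n-1})$, which forces the additive control $\kappa_j-k_j\to0$; the rational approximation used for the volume bound would lose this. The Nakano vanishing in Step~2 must also hold along the sequence $L_j$ rather than for powers of a fixed ample bundle. I expect the twist by $K_X^{-1}$ with a fixed product metric to handle this, since the negative curvature contributions are bounded independently of $j$.
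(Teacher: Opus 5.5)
Your proposal is correct and follows the paper's proof essentially step for step. Like the paper, you use the almost-integral lemma to get line bundles whose curvature is $k_j\omega$ plus an additive $o(1)$ error, obtain $h^0=\chi$ by Nakano vanishing after twisting by $K_X^{-1}$, and compare the $k_j^{n-1}$ coefficients of Hirzebruch--Riemann--Roch with the jet-counting bound for $s=0$ and for some $s>n/2$ (your $s=n$ qualifies).
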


\begin{proof}
Set
\[
 \alpha=\frac{[\omega]}{2\pi}\in H^2(X,\R).
\]
The volume equality is precisely
\begin{equation}\label{eq:alpha-volume-one}
 \alpha^n=1.
\end{equation}
Let
\[
 W=H^2(X,\R),
 \qquad
 \Lambda=\operatorname{im}
 \bigl(H^2(X,\Z)/\mathrm{torsion}\to H^2(X,\R)\bigr).
\]
Note that $\Lambda$ is simply the torsion free part of the Neron-Severi group. By Lemma~\ref{lem:almost-integral}, there are $k_j\to\infty$ and integral
classes $\gamma_j$ such that
\begin{equation}\label{eq:equality-delta}
 \delta_j:=\gamma_j-k_j\alpha\longrightarrow0.
\end{equation}
Corollary~\ref{cor:cotensor} gives
$H^2(X,\R)=H^{1,1}(X,\R)$.  The Lefschetz $(1,1)$ theorem \cite{Huybrechts} therefore
produces a holomorphic line bundle $L_j$ with
$c_1(L_j)=\gamma_j$.

Let $\eta_j$ be the harmonic representative, with respect to $\omega$, of
$2\pi\delta_j$.  Then $\eta_j\to0$ in $C^\infty$.  After changing the
Hermitian metric on $L_j$ by the $\partial\bar\partial$-lemma \cite{Huybrechts}, we may arrange
\begin{equation}\label{eq:equality-line-curvature}
 \sqrt{-1}\Theta(L_j)=k_j\omega+\eta_j.
\end{equation}
Put
\[
 \varepsilon_j=\|\eta_j\|_{\operatorname{op},\omega}.
\]
Then $\varepsilon_j\to0$ and
\begin{equation}\label{eq:equality-line-two-sided}
 (k_j-\varepsilon_j)\omega
 \leq\sqrt{-1}\Theta(L_j)
 \leq(k_j+\varepsilon_j)\omega.
\end{equation}
In particular, $L_j$ is positive for all large $j$.

Fix an integer $s\geq0$ and retain the notation
$\mathcal E_s=(\Omega_X^1)^{\otimes s}$ and $r_s=n^s$.  Choose a Hermitian metric on
$\mathcal E_s\otimes K_X^{-1}$.   It follows
from \eqref{eq:equality-line-two-sided} that
$(\mathcal E_s\otimes K_X^{-1})\otimes L_j$ is Nakano positive for all large $j$.
Kodaira--Nakano vanishing \cite{Huybrechts} therefore gives
\begin{equation}\label{eq:vector-kodaira-vanishing}
 H^q(X,\mathcal E_s\otimes L_j)=0\qquad(q>0).
\end{equation}
Thus $h^0(X,\mathcal E_s\otimes L_j)=\chi(X,\mathcal E_s\otimes L_j)$.

Corollary~\ref{cor:h0}, applied with $L=L_j$, the chosen integer $s$, and
$\kappa=k_j+\varepsilon_j$, gives
\begin{align}
 h^0(X,\mathcal E_s\otimes L_j)
 &\leq
 \frac{r_s}{n!}
 \left(
  k_j+\varepsilon_j
  +\frac{n+1}{2}-\frac{n+1}{n}s
 \right)^n\notag\\
 &=\frac{r_s}{n!}k_j^n
 +\frac{r_s}{(n-1)!}(n+1)
  \left(\frac12-\frac{s}{n}\right)k_j^{n-1}
 +o(k_j^{n-1}).
 \label{eq:vector-section-upper}
\end{align}
On the other hand, Hirzebruch--Riemann--Roch and
\eqref{eq:vector-kodaira-vanishing} give
\begin{equation}\label{eq:vector-HRR}
 h^0(X,\mathcal E_s\otimes L_j)
 =\frac{r_s}{n!}\gamma_j^n
 +\frac{1}{(n-1)!}
 \left(c_1(\mathcal E_s)+\frac{r_s}{2}c_1(X)\right)
 \gamma_j^{n-1}
 +O(k_j^{n-2}),
\end{equation}
with the evident omission of the remainder when $n=1$.  By the splitting principle, for $s\geq1$,
\[
 c_1(\mathcal E_s)=-s n^{s-1}c_1(X),
\]
while $c_1(\mathcal E_0)=0$; hence the following identity holds for every $s\geq0$:
\[
 c_1(\mathcal E_s)+\frac{r_s}{2}c_1(X)
 =r_s\left(\frac12-\frac{s}{n}\right)c_1(X).
\]
Using \eqref{eq:equality-delta} and \eqref{eq:alpha-volume-one} in
\eqref{eq:vector-HRR}, we obtain
\begin{align}
 h^0(X,\mathcal E_s\otimes L_j)
 &=\frac{r_s}{n!}k_j^n\notag\\
 &\quad+
 \frac{r_s}{(n-1)!}
 \left(\frac12-\frac{s}{n}\right)
 c_1(X)\alpha^{n-1}k_j^{n-1}
 +o(k_j^{n-1}).
 \label{eq:vector-HRR-expansion}
\end{align}
Comparison of \eqref{eq:vector-section-upper} and
\eqref{eq:vector-HRR-expansion} yields, for every integer $s\geq0$,
\begin{equation}\label{eq:two-sided-c1-coefficient}
 \left(\frac12-\frac{s}{n}\right)
 \left(c_1(X)\alpha^{n-1}-(n+1)\right)
 \leq0.
\end{equation}
Taking $s=0$ gives
$c_1(X)\alpha^{n-1}\leq n+1$, while taking any integer $s>n/2$ gives the
reverse inequality.  Hence
\[
 c_1(X)\alpha^{n-1}=n+1.
\]

Let $\rho_\omega$ be the Ricci form.  Since
$[\rho_\omega]=2\pi c_1(X)$ and
\[
 \rho_\omega\wedge\omega^{n-1}
 =\frac{S_\omega}{n}\omega^n,
\]
we have
\[
 c_1(X)\alpha^{n-1}
 =\frac{1}{n(2\pi)^n}\int_XS_\omega\,\omega^n.
\]
Together with $\int_X\omega^n=(2\pi)^n$, this proves
\eqref{eq:average-scalar-equality}.
\end{proof}

We now extract the two geometric rigidity statements for which the average
scalar curvature has a pointwise lower bound.  In the Ricci case, after
obtaining the K\"ahler--Einstein equation, we use Fujita's optimal
anticanonical-volume theorem and Bando--Mabuchi uniqueness.

\begin{corollary}[Rigidity under holomorphic sectional or Ricci curvature]
\label{cor:equality-rigidity}
Let $(X,\omega)$ be a compact $n$-dimensional K\"ahler manifold satisfying
\[
 \int_X\omega^n=(2\pi)^n.
\]

\begin{enumerate}
\item If $H_\omega\geq2$, then $H_\omega\equiv2$, and
$(X,\omega)$ is biholomorphically isometric to
$(\mathbb \C\PP^n, \omega_{\C\PP^n})$.

\item If $\Ric_\omega\geq(n+1)\omega$, then
$\Ric_\omega=(n+1)\omega$, and again
$(X,\omega)$ is biholomorphically isometric to
$(\mathbb \C\PP^n, \omega_{\C\PP^n})$; in particular,
$H_\omega\equiv2$.
\end{enumerate}
\end{corollary}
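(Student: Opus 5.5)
In both cases Lemma~\ref{lem:rc-ric-h} gives $\mu_{RC}\geq\frac{n+1}{n}$ everywhere. Since we also assume $\int_X\omega^n=(2\pi)^n$, Theorem~\ref{thm:equality-average} applies and yields
\[
 \int_X S_\omega\,\omega^n=n(n+1)\int_X\omega^n .
\]
The plan is to pair this integral identity with a pointwise inequality $S_\omega\geq n(n+1)$. Equality of the integrals then forces pointwise equality, and pointwise equality forces the stronger curvature condition.

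\emph{Case (1).} Fix $x$ and a unitary basis $e_1,\dots,e_n$. Apply \eqref{eq:phase1} of Lemma~\ref{lem:royden} with all $a_i=1$:
\[
 S_\omega=\sum_{i,j}R_{i\bar i j\bar j}\geq n^2+n=n(n+1).
\]
This inequality holds for every unitary frame, so combined with the average identity it gives $S_\omega\equiv n(n+1)$. To get $H_\omega\equiv 2$, use Berger's averaging formula: the average of $H_\omega$ over the unit sphere of $T^{1,0}_xX$ is $\frac{2}{n(n+1)}S_\omega$. Since $H_\omega\geq 2$ and the average equals $2$, continuity gives $H_\omega\equiv 2$ on the whole sphere. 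Alternatively, one can stay within Lemma~\ref{lem:royden}: equality in \eqref{eq:phase1} for $a=(1,\dots,1)$ forces equality in \eqref{eq:ztheta} for almost every $\theta$, hence $H(z_\theta)=2$, and the $z_\theta$ sweep out all directions as the frame varies. Thus $(X,\omega)$ is a compact Kähler manifold with constant holomorphic sectional curvature $2$. By the classification of complex space forms (Hawley--Igusa), its universal cover is $(\C\PP^n,\omega_{\C\PP^n})$. Since $\C\PP^n$ has no nontrivial free holomorphic isometric quotients (every holomorphic automorphism has a fixed point, by Lefschetz), $X$ is $\C\PP^n$ itself.

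\emph{Case (2).} Taking the trace of $\Ric_\omega-(n+1)\omega\geq0$ gives $S_\omega\geq n(n+1)$. The average identity then gives equality, and a nonnegative form with zero trace vanishes, so $\Ric_\omega=(n+1)\omega$. Hence $X$ is Fano and Kähler--Einstein with $[\omega]=\frac{2\pi}{n+1}c_1(X)$. The volume equality becomes
\[
 c_1(X)^n=(n+1)^n,
\]
which is the anticanonical volume of $\PP^n$. Fujita's theorem (optimal volume for Kähler--Einstein Fano manifolds, via K-semistability) then gives $X\cong\PP^n$ biholomorphically. Both $\omega$ and $\omega_{\C\PP^n}$ are Kähler--Einstein metrics in the class $2\pi c_1(\mathcal O(1))$, so Bando--Mabuchi uniqueness gives $\omega=\phi^*\omega_{\C\PP^n}$ for some automorphism $\phi$. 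This proves the biholomorphic isometry, and $H_\omega\equiv2$ follows.

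The main obstacle is the last step of each case, which invokes external deep results: Fujita's optimal-volume theorem in case (2), and the space-form classification together with the passage from $S_\omega\equiv n(n+1)$ to $H_\omega\equiv2$ in case (1). I would cite these rather than reprove them, checking only that the normalizations match those of \eqref{norm}.
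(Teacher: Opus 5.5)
Your proposal is correct and follows the paper's proof essentially step for step. Lemma~\ref{lem:rc-ric-h} and Theorem~\ref{thm:equality-average} give the average identity, and a pointwise bound $S_\omega\geq n(n+1)$ then forces $H_\omega\equiv 2$ (resp.\ $\Ric_\omega=(n+1)\omega$); the conclusion follows from the space-form classification (resp.\ Fujita plus Bando--Mabuchi). The only differences are cosmetic: in case (1) you obtain $S_\omega\geq n(n+1)$ from \eqref{eq:phase1} with all $a_i=1$ rather than from Berger's averaging formula, and you spell out the Hawley--Igusa and fixed-point step that the paper compresses into a reference to Cartan--Ambrose--Hicks.
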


\begin{proof}
Both curvature hypotheses imply $\mu_{RC}\geq(n+1)/n$ by
Lemma~\ref{lem:rc-ric-h}, so Theorem~\ref{thm:equality-average} applies.

Assume first that $H_\omega\geq2$.  Then by Berger's averaging trick \cite{Ber},  at each $x\in X$, one has
\begin{equation}\label{eq:sphere-average-H}
 \int_{|\xi|=1}H_\omega(\xi)\,d\sigma_x(\xi)
 =\frac{2S_\omega(x)}{n(n+1)},
\end{equation}
where $d\sigma_x$ denotes the
normalized Lebesgue measure on the unit sphere in $T_x^{1,0}X$. Thus $H_\omega\geq2$ implies
$S_\omega\geq n(n+1)$ pointwise. On the other hand, the integral average of $S_\omega$ is exactly $n(n+1)$ by Theorem~\ref{thm:equality-average}; hence
$S_\omega\equiv n(n+1)$ and $H_\omega\equiv2$. The standard Cartan-Ambrose-Hicks argument then gives the first assertion (cf. \cite{KN2}).

Now assume $\Ric_\omega\geq(n+1)\omega$.  Then
\[
 S_\omega=\operatorname{tr}_\omega\Ric_\omega
 \geq n(n+1).
\]
The average scalar-curvature equality gives
$S_\omega\equiv n(n+1)$.  This in turn forces $\omega$ to be Einstein, i.e., \begin{equation}\label{eq:ricci-equality-independent-start}
 \Ric_\omega=(n+1)\omega.
\end{equation}
Fujita's theorem on optimal volume of K\"ahler-Einstein Fano manifolds \cite[Theorem 1.1]{Fujita} and the uniqueness of K\"ahler-Einstein metrics on $\C\PP^n$ then imply rigidity.  
\end{proof}

\section{Concluding remarks}

(1) Our initial prompt to ChatGPT 5.6 Sol Pro was to find an upper bound for the volumes of compact K\"ahler surfaces with $H \ge 2$. Its proof was not the one in this paper - it proved a non-optimal upper bound for the first eigenvalue of the Laplacian and used that to estimate volume. Subsequent queries yielded a non-optimal estimate in all dimensions based on a proof broadly similar to the current one. Further interactions led to it realizing that the non-optimality arose from the vanishing theorem. Its proof of the revised vanishing theorem led us to seek a general positivty condition implied by both Ricci positiivty and positive holomorphic sectional curvature. 
\vspace{2mm}

(ii) ChatGPT 5.6 Sol Pro was unable to either prove or construct a counterexample to rigidity in the general setting of Theorem \ref{thm:main}. We pose this as an open question: 

{\it Let $(X,\omega)$ be a compact $n$-dimensional K\"ahler manifold.
Suppose that
\[
\mu_{\RC}(x)\geq \frac{n+1}{n}
\]
for every $x\in X$. If
\[
\int_X\omega^n\ = (2\pi)^n,
\]
is $(X, \omega)$ holomorphically isometric to $(\C\PP^n, \omega_{\C\PP^n})$?

\end{document}